\newtheorem{theorem}{Theorem}[section]
\newtheorem{question}{Question}
\newtheorem{lemma}[theorem]{Lemma}
\newtheorem{prop}[theorem]{Proposition}
\newtheorem{cor}[theorem]{Corollary}
\newtheorem{claim}[theorem]{Claim}
\newtheorem{subclaim}[theorem]{Subclaim}
\theoremstyle{definition}
\newtheorem{definition}[theorem]{Definition}
\newtheorem{remark}[theorem]{Remark}
\newcommand{\ran}{\textnormal{ran}}
\newcommand{\inv}{^{-1}}
\newcommand{\Z}{\mathbb{Z}}
\newcommand{\F}{\mathbb{F}}
\newcommand{\N}{\mathbb{N}}
\newcommand{\B}{\mathcal{B}}
\def\acts{\curvearrowright}
\newcommand{\CONT}{\mathtt{CONTINUOUS}}
\newcommand{\BOREL}{\mathtt{BOREL}}
\newcommand{\MEAS}{\mathtt{MEASURE}}
\newcommand{\BAIRE}{\mathtt{BAIREMEAS}}
\newcommand{\EXISTS}{\mathtt{EXISTS}}
\newcommand{\BC}{\mathtt{BAIRE}}
\newcommand{\Sig}{\mathbf{\Sigma}}
\newcommand{\Del}{\mathbf{\Delta}}
\newcommand{\boldPi}{\mathbf{\Pi}}
\title{LCLs in the Borel Hierarchy}
\author{Felix Weilacher}
\thanks{The author is supported by the NSF under award number DMS-2402064.}
\begin{document}

\begin{abstract}
A locally checkable labeling problem (LCL) on a group $\Gamma$ asks one to find a labeling of the Cayley graph of $\Gamma$ satisfying a fixed, finite set of “local” constraints. Typical examples include proper coloring and perfect matching problems. 
In descriptive combinatorics, one often considers the existence of solutions to LCLs in the setting of descriptive set theory. For example, given a free action of $\Gamma$ on a Polish space $X$, we might be interested in solving a given LCL on each orbit in a continuous, Borel, measurable, etc. way. 


In an attempt to understand more finely the gap between Borel and continuous combinatorics, we consider the existence of Baire class $m$ solutions to LCLs. For all $n > 1$ and $m \in \omega$, we produce an LCL on $\mathbb{F}_n$ which always admits Baire class $m+1$ solutions, but not necessarily Baire class $m$ solutions. 
\end{abstract}

\maketitle

\section{Introduction}


This paper is about \textit{descriptive combinatorics}, in which certain classical problems from combinatorics are studied in the context of descriptive set theory. Some problems which have received much attention in the field are those in which a global labeling of some structure, typically a graph, must be found subject to ``local'' constraints. 

In this paper, our graphs will be those induced by actions of finitely generated groups, and in this setting the following formalizes the class of problems described above. These are sometimes called \textit{domino problems} on a group. 

\begin{definition}[\cite{NaorStock}]\label{def:LCL}
    Let $\Gamma$ be a countable group. A \textit{locally checkable labeling problem}, or \textit{LCL}, on $\Gamma$ is a triple of the form $\Pi = (\Lambda,S,\mathcal{A})$, where
    \begin{itemize}
        \item $\Lambda$ is a finite set (the \textit{labels}).
        \item $S \subseteq \Gamma$ is finite (the \textit{window}).
        \item $\mathcal{A} \subseteq \Lambda^S$. (the \textit{constraints}).
    \end{itemize}
    Let $\Gamma \acts X$ be a free action. A \textit{solution} to $\Pi$ on $X$, also called a \textit{$\Pi$-labeling} of $X$, is a function $f : X \to \Lambda$ such that for all $x \in X$,
    \[ (\gamma \mapsto f(\gamma \cdot x)) \in \mathcal{A}.\]
\end{definition}

Intuitively, $\Pi$ is an instance-solution problem, where solving $\Pi$ on an instance $\Gamma \acts X$ amounts to labeling the points in $X$ with $\Lambda$ subject to some constraints which are local in the sense that they can be checked by looking at the window $S \cdot x$ around each point $x$. 

The archetypal example of an LCL is \textit{proper coloring}. Recall that for $k$ a cardinal, a (proper) $k$-coloring of a graph $G$ with vertex set $X$ is a function $f : X \to k$ assigning different values (``\textit{colors}'') to adjacent vertices. If $\Gamma \acts X$ and $S \subseteq \Gamma$, we let $G(X,S)$ denote the associated \textit{Schreier graph}. Its vertex set is $X$ and two points $x,y \in X$ are adjacent if and only if $\exists \gamma \in S, \gamma \cdot x = y$ or vice versa.
For $k \in \omega$, the problem of proper $k$-coloring Schreier graphs of $\Gamma$ with respect to $S$ is an LCL of the form $\Pi = (k,\{1\} \cup S,\mathcal{A})$, with 
\[ \mathcal{A} = \{f : \{1\} \cup S \to \Lambda \mid \forall \gamma \in S \setminus \{1\}, f(1) \neq f(\gamma)\} \]
in the sense that for any free $\Gamma \acts X$, $k$-colorings of $G(X,S)$ are exactly solutions to $\Pi$ on $X$. 

LCLs on groups have long been studied in the context of logic and definability. The \textit{domino problem} for group $\Gamma$, for example, asks whether the set of LCLs $\Pi$ having a solution on the left translation action $\Gamma \acts \Gamma$ (equivalently, on any free action of $\Gamma)$ is decidable. This goes back to the influential work of Berger on Wang tilings, where $\Gamma = \Z^2$ \cite{berger1966undecidability}. We instead focus on the definability of solutions to LCLs themselves.

\begin{definition}\label{def:borel_coloring}
    Let $\Pi$ be an LCL on $\Gamma$.
    Let $\Gamma \acts X$ be a free Borel action on a standard Borel space $X$. By a \textit{Borel} $\Pi$-labeling of $X$ we simply mean a $\Pi$-labeling of $X$ which is Borel as a function $X \to \Lambda$. 

    We define measurable, Baire measurable, continuous, etc. $\Pi$-labelings similarly, given the appropriate structures on $X$ to make these notions meaningful. 
\end{definition}

The study of definable solutions to specific LCLs on specific actions is often important to questions in the dynamics of countable group actions which are not a priori about LCLs. For example, results about (definable) equidecompositons, such as versions of the Banach-Tarski paradox or Tarski's circle-squaring problem, often depend on the analysis of (definable) matchings in graphs \cite{GMP, marks-unger,marks-unger-BT}. As another example, in the most recent progress on Weiss' question -- whether Borel actions of amenable groups generate hyperfintie equivalence relations -- an important role is played by the Borel colorability of a certain auxiliary graph, and the key new tool, asymptotic dimension, can also be presented as an LCL \cite{dimension}.

It is therefore important to understand when an LCL on a group $\Gamma$ admits definable solutions for \textit{any} appropriate action of $\Gamma$. Versions of this question give rise to what might be called ``complexity classes''.

\begin{definition}
    Fix a countable group $\Gamma$ and an LCL $\Pi$ on $\Gamma$. 
    \begin{itemize}
        \item We say $\Pi \in \EXISTS(\Gamma)$ if for any free action $\Gamma \acts X$, $X$ admits a $\Pi$-labeling. Equivalently, $\Gamma$ admits a $\Pi$-labeling with respect to the left multiplication action.
        
        \item We say $\Pi \in \BOREL(\Gamma)$ if for any free Borel action on a standard Borel space $\Gamma \acts X$, $X$ admits a Borel $\Pi$-labeling. 
        
        \item We say $\Pi \in \CONT(\Gamma)$ if for any free continuous action on a 0-dimensional Polish space $\Gamma \acts X$, $X$ admits a continuous $\Pi$-labeling. 
        
        \item We say $\Pi \in \MEAS(\Gamma)$ if for any free Borel action on a standard probability space $\Gamma \acts (X,\mu)$, $X$ admits a $\mu$-measurable $\Pi$-labeling.

        \item We say $\Pi \in \BAIRE(\Gamma)$ if for any free Borel action on a Polish space $\Gamma \acts X$, $X$ admits a Baire measurable $\Pi$-labeling. 
    \end{itemize}
\end{definition}

The resulting ``complexity theory'' gives a natural way of classifying problems by their difficulty from the point of view of descriptive combinatorics. 

We now consider the case where $\Gamma$ is a free group, $\Gamma = \F_n$ for $n \leq \omega$. For $n = 1$, that is, $\Gamma = \Z$, it turns out that essentially all of the nontrivial descriptive complexity classes coincide, consisting of the problems whose solutions do not enforce any sort of periodicity \cite{GR_paths}. When $n > 1$ the situation is more interesting. Since all the non abelian free groups are subgroups of eachother, we can fix $n = 2$ here without losing generality (see Section \ref{sec:shift}).
This case was systematically considered in \cite{BCGGRV}, where the strict hierarchy pictured in Figure \ref{fig:linear_hierarchy} is obtained. (This combines the results from many other papers, \cite{kst, conley2016brooks, conley.miller.toast, conley2013measurable, Marks}.)

\begin{figure}\label{fig:linear_hierarchy}
\begin{tikzpicture}
                \node (1) at (0, 0) {$\CONT(\F_n)$};
                \node (2) at (0, 1.5) {$\BOREL(\F_n)$};
                \node (3) at (0, 3) {$\MEAS(\F_n)$};
                \node (4) at (0, 4.5) {$\BAIRE(\F_n)$};
                \node (5) at (0,6) {$\EXISTS(\F_n)$};
                \node (6) at (0,7.5) {All LCLs};

                \node[align = right]  at (-3, 0) {$2n+1$-coloring};
                \node[align = right]  at (-3, 3) {$2n$-coloring};
                \node[align = right]  at (-4.2, 4.5) {3-coloring (if $n > 2$) \\ paradoxical decomposition};
                \node[align = right]  at (-3,6) {2-coloring};
                \node[align = right] at (-3,7.5) {1-coloring};
                
                \draw[->] (1) -- (2) node[midway, right] {$\subseteq$};
                \draw[blue,->] (2) -- (3) node[midway, right] {$\subsetneq$};
                \draw[blue,->] (3) -- (4) node[midway, right] {$\subsetneq$};
                \draw[blue,->] (4) -- (5) node[midway, right] {$\subsetneq$};
                \draw[blue,->] (5) -- (6) node[midway, right] {$\subsetneq$};
\end{tikzpicture}
\caption{The complexity classes introduced so far for $\F_n$, $n > 1$. Arrows between classes denote inclusion, and blue arrows indicate that the inclusion is strict. To the left of each class is an LCL belonging to that class but no lower one. Each $k$-coloring problem refers to the Schreier graphs generated by the usual generating set for $\F_n$. 
}

\end{figure}
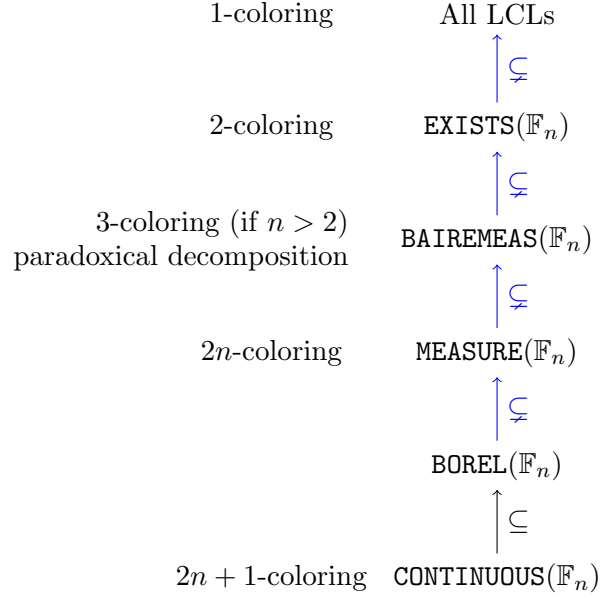

Let us point out the surprising inclusion $\MEAS(\F_n) \subsetneq \BAIRE(\F_n)$, proved originally in \cite{BCGGRV}. The rest of the inclusions in the diagram of course hold for any group, but it is interesting that here they are all strict. 

This paper is primarily concerned with the gap between $\CONT(\F_n)$ and $\BOREL(\F_n)$. This interval is naturally stratified by the \textit{Baire hierarchy}. Note that some authors use a different definition which is off by one for infinite $\alpha$. For this definition see \cite{louveau_notes}.

\begin{definition}
    Let $X$ and $Y$ be Polish spaces, $f : X \to Y$, and $\alpha < \omega_1$. We say that $f$ is \textit{Baire class $\alpha$}, written $f \in \B_\alpha$, if for any open $U \subseteq Y$, $f\inv(U) \in \Sig_{1+\alpha}(X)$. That is, $f$ is \textit{$\Sig_{1+\alpha}$-measurable}
\end{definition}

Thus, $\B_0$ is the set of continuous functions and 
$\bigcup_{\alpha < \omega_1} \B_\alpha$ is the set of Borel functions. For $\alpha < \omega_1$, Let us say that an LCL $\Pi$ on $\Gamma$ is in the class $\BC_\alpha$ if for any free continuous action on a 0-dimensional Polish space $\Gamma \acts X$, $X$ admits a Baire class $\alpha$ $\Pi$-labeling. Thus, $\BC_0(\Gamma) = \CONT(\Gamma)$. Of course, $\BC_\alpha(\Gamma) \subseteq \BC_{\beta}(\Gamma)$ if $\alpha \leq \beta$. 
Note that since the set of labels $\Lambda$ of an LCL $\Pi = (\Lambda,S,\mathcal{A})$ is finite, a $\Pi$-labeling $f : X \to \Lambda$ is Baire class $\alpha$ if and only if for each $\lambda \in \Lambda$, $f\inv(\lambda)$ is $\Del^0_{1+\alpha}$. 
This also explains the 0-dimensionality restriction. Without it, a space $X$ might have no nontrivial $\Del^0_1$, that is, clopen, sets, making the class $\CONT(\Gamma) = \BC_0(\Gamma)$ uninteresting. 

Our main result is that the first $\omega$ levels of this complexity hierarchy are distinct when $\Gamma = \F_2$.

\begin{theorem}\label{thm:main_hierarchy}
    For each $n \in \omega$, $\BC_n(\F_2) \subsetneq \BC_{n+1}(\F_2)$.
\end{theorem}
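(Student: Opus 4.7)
The plan is to exhibit, for each $n \in \omega$, an explicit LCL $\Pi_n$ on $\F_2$ witnessing $\Pi_n \in \BC_{n+1}(\F_2) \setminus \BC_n(\F_2)$. That is: (i) on every $0$-dimensional free continuous action $\F_2 \acts X$, the problem $\Pi_n$ admits a Baire class $n+1$ solution, and (ii) there is a specific such action on which $\Pi_n$ admits no Baire class $n$ solution. I would proceed inductively on $n$, with the base case $n = 0$ requiring an LCL in $\BC_1(\F_2) \setminus \CONT(\F_2)$ --- essentially the assertion that the $\CONT$ level of Figure \ref{fig:linear_hierarchy} does not already coincide with $\BC_1$; concrete examples can be extracted from known Borel-vs-continuous separations \cite{BCGGRV}, as the relevant Borel constructions can typically be arranged to live in $\BC_1$.

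For $n \geq 1$, I would build $\Pi_n$ by iterating such a base LCL $n+1$ times, packaging the layers into one LCL via a product label set. Each vertex $x$ carries a tuple label $(\ell_0(x),\ldots,\ell_n(x))$, subject to the constraint that, for each $k$, the component $\ell_k$ is a solution to the base problem on a locally definable subgraph $G_k$ determined by $(\ell_0,\ldots,\ell_{k-1})$. Since each $G_k$ can be specified within a fixed finite window from the previous layers, the resulting $\Pi_n$ remains an LCL in the sense of Definition \ref{def:LCL}. The upper bound $\Pi_n \in \BC_{n+1}(\F_2)$ then follows by induction: construct the $k$-th layer by a $\Sig^0_{k+1}$-measurable procedure on its relevant support, so that each preimage of the overall label lies in $\Del^0_{n+2}$.

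The main obstacle is the lower bound $\Pi_n \notin \BC_n(\F_2)$. To establish it, I would construct a specific free continuous action $\F_2 \acts X_n$ on a $0$-dimensional Polish space --- most naturally a carefully chosen subshift of $\Lambda^{\F_2}$ --- and argue that any Baire class $n$ solution leads to a contradiction. The core is a Baire-category / genericity argument: any Baire class $n$ labeling $f$ is continuous on a comeager $G_\delta$ set $C$, and on $C$ the shift structure together with the $n+1$ iterated constraints of $\Pi_n$ should force $f$ to commit to incompatible values along dense orbits. The principal difficulty is to show that each of the $n+1$ layers of $\Pi_n$ genuinely costs one additional unit of Baire-class complexity, rather than being bypassable by a clever Baire class $n$ trick; I expect this to require a delicate inductive/combinatorial argument, perhaps in the spirit of the Marks-style determinacy games used elsewhere in descriptive combinatorics to separate consecutive complexity classes.
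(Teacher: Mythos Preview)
Your high-level shape --- layer a base LCL and argue each layer costs one unit of Baire-class complexity --- matches the paper's structure, but the lower-bound mechanism you propose cannot work as stated, and the paper's actual mechanism is the real content of the argument.

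The specific problem: ``any Baire class $n$ labeling $f$ is continuous on a comeager $G_\delta$ set $C$'' is true, but it is equally true of any Baire class $n+1$ (indeed any Borel) labeling. A pure Baire-category restriction therefore cannot separate $\BC_n$ from $\BC_{n+1}$; if your genericity argument derived a contradiction from continuity-on-a-comeager-set alone, it would also rule out the $\BC_{n+1}$ solution you just constructed. Something finer than Baire category is required, and while you correctly flag this as the ``principal difficulty'', you do not supply the missing idea.

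What the paper does instead is strengthen the inductive hypothesis: rather than just ``$\Pi \notin \BC_{n-1}$'', it fixes a distinguished subset $\Lambda_* \subseteq \Lambda$ and proves that for \emph{every} Borel $\Pi$-labeling $c$ of the shift, the set $c^{-1}(\Lambda_*)$ is $\boldPi^0_n$-hard in the Wadge sense (Theorem~\ref{thm:main_strong}). This immediately yields $\Pi \notin \BC_{n-1}$, but more importantly it is an invariant that survives the inductive step. Each new layer adds a fresh $\Z$-generator $a$ and a label $N$ intended to encode ``my entire $a$-orbit avoids $\Lambda_*$''; the only alternative to $N$ on such an orbit is a proper $2$-coloring of it. The lower bound then rests on two ingredients you do not have. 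First, Marks' determinacy lemma is applied not merely to get nonemptiness but with a Wadge-game payoff set, producing a continuous $\Z$-equivariant map into the new label set that pins $c^{-1}(\Lambda_*)$ against a fixed $\Sig^0_{1+\alpha}$-complete set $U_{1+\alpha}$. Second, a standalone result (Theorem~\ref{thm:2_color_complexity}, proved via Matrai's topological Hurewicz test pairs) shows that deleting any Borel-$2$-colorable $\Z$-invariant set from $F(\Z,U_{1+\alpha})$ cannot destroy its $\boldPi^0_{\alpha+2}$-hardness. This second ingredient is precisely what certifies that the $2$-coloring escape hatch really costs a full level of complexity and cannot be bypassed by a ``clever Baire class $n$ trick''; it is the heart of the proof and is absent from your sketch.
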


By Corollary \ref{cor:subgroup_LCL} the result also applies to all of the other non abelian free groups.
It may initially seem unsatisfying that this result does not extend to all $\alpha < \omega_1$. However, note that there are only countably many LCLs! We will also see in the next section that there is a \textit{universal} action $\Gamma \acts X$ of any group $\Gamma$ in this setting -- that is, one such that $X$ admits a continuous, Borel, or Baire class $\alpha$ solution to some LCL $\Pi$ on $\Gamma$ if and only if $\Pi$ is in the corresponding complexity class. This implies:

\begin{prop}
    Let $\Gamma$ be a countable group.
    \[ \BOREL(\Gamma) = \bigcup_{\alpha < \omega_1} \BC_\alpha(\Gamma).\]
\end{prop}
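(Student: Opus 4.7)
The plan is to leverage the universal action $\Gamma \acts X$ advertised in the paragraph above, which by definition satisfies that $\Pi$ belongs to $\BC_\alpha(\Gamma)$ (resp.\ $\BOREL(\Gamma)$) if and only if $X$ itself admits a Baire class $\alpha$ (resp.\ Borel) $\Pi$-labeling. Granting this, the proposition reduces to the standard fact that every Borel function from a Polish space into a finite set is Baire class $\alpha$ for some countable $\alpha$.

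For the inclusion $\bigcup_{\alpha < \omega_1}\BC_\alpha(\Gamma) \subseteq \BOREL(\Gamma)$, I would start from $\Pi \in \BC_\alpha(\Gamma)$, use universality to produce a Baire class $\alpha$ $\Pi$-labeling on $X$, observe that it is in particular Borel, and apply universality in the Borel direction to conclude $\Pi \in \BOREL(\Gamma)$.

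For the reverse inclusion, starting from $\Pi \in \BOREL(\Gamma)$, universality yields a Borel labeling $f : X \to \Lambda$ on the universal action. Since $\Lambda$ is finite, each preimage $f^{-1}(\lambda)$ lies in $\Sig_{1+\alpha_\lambda}(X)$ for some $\alpha_\lambda < \omega_1$; setting $\alpha := \max_{\lambda \in \Lambda}\alpha_\lambda$ (a finite maximum, hence countable) places every $f^{-1}(\lambda)$ into $\Del^0_{1+\alpha}$, since each is also the complement of a finite union of $\Sig_{1+\alpha}$ sets. This makes $f$ Baire class $\alpha$, and universality in reverse delivers $\Pi \in \BC_\alpha(\Gamma) \subseteq \bigcup_{\beta < \omega_1}\BC_\beta(\Gamma)$.

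The only substantive ingredient here is the universal action itself, whose construction the text defers to the next section; granting that input, the proposition is essentially a formal consequence of the Baire hierarchy exhausting the Borel functions into a finite set, and I do not anticipate any real obstacle in either direction.
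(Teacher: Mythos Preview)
Your proposal is correct and matches the paper's intended argument exactly: the paper does not spell out a proof but merely says the proposition follows from the existence of the universal action (Corollary \ref{cor:shift_LCL_universal}), and you have supplied precisely the routine details---reduce to $F(\Gamma,2^\omega)$ via universality, then use that a Borel map into a finite set has all its fibers in some common $\Del^0_{1+\alpha}$.
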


Thus the hierarchy $\BC_\alpha(\F_2)$ cannot increase strictly forever, and the following question is well defined. 

\begin{question}\label{q:main}
    What is the least $\alpha < \omega_1$ such that $\BC_\alpha(\F_2) = \BOREL(\F_2)$? 
\end{question}

We conjecture that the answer is $\omega$. In fact we conjecture that $\BOREL(\F_2) = \bigcup_{n \in \omega} \BC_n(\F_2)$. 

This paper is not the first to consider the existence of solutions to LCLs at given levels of the Baire hierarchy. First, the continuous setting has naturally received much attention, see e.g. \cite{gjks}. At higher levels there are a series of interesting results of Lecomte and Zeleny concerning Borel graphs which are simple as subsets of the plane and Borel 2-colorable, but which don't have finite or even countable Baire-class $\alpha$ colorings for a given $\alpha$ \cite{LZ_closed,LZ_rectangles}.

However, this paper does seem to be the first to consider levels above continuous in the framework of LCLs on groups. The challenges here are somewhat different in flavor from those in Lecomte and Zeleny's work. While they need to find Borel graphs which are complicated in the appropriate ways, we are given some straightforward graphs, and are interested in whether combinatorial problems on those graphs can require complicated solutions. To highlight this difference, we point to the fact that for any infinite group $\Gamma$ with generating set $S$, proper 2-coloring with respect to $S$ is not in $\BOREL(\Gamma)$ or even $\BAIRE(\Gamma)$ \cite{weilacher2020marked}, while Lecomte and Zeleny's graphs are Borel 2-colorable.

\section{The Bernoulli shift}\label{sec:shift}

In this preliminary section we introduce the aforementioned universal action which explains some of the statements in the introduction

\begin{definition}
    Let $\Gamma$ be a countable group and $X$ a set. The \textit{Bernoulli shift} on $X$ is the action of $\Gamma$ on the set $X^\Gamma$ given by $\gamma(x)(\delta) = x(\gamma\inv\delta)$ for $x \in X^\Gamma$ and $\gamma,\delta \in \Gamma$. When $X$ is a topological space, we endow $X^\Gamma$ with the product topology, which makes this action continuous. 

    The \textit{free part} of the Bernoulli shift on $X$, denoted $F(\Gamma,X)$, is the set of points of the Bernoulli shift with trivial stabilizer. This is an invariant set and so inherits the action.
\end{definition}

Note that when $X$ is a Hausdorff space $F(\Gamma,X)$ is $G_\delta$. In particular it is Polish if $X$ is. It is also dense if $|X| > 1$. 

\begin{prop}
    Let $\Gamma$ be a countable group acting continuously on a 0-dimensional second countable Hausdorff space $X$. Then there is a continuous $\Gamma$-equivariant injective map $f : X \to (2^\omega)^\Gamma$. If $\Gamma \acts X$ is free there is such a map $f : X \to F(\Gamma,2^\omega)$. 
\end{prop}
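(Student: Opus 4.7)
The plan is to build $f$ coordinate by coordinate from a single continuous injection $g : X \to 2^\omega$, using the action to fill in the remaining coordinates. Since $X$ is second countable and $0$-dimensional, it has a countable basis of clopen sets $\{U_n\}_{n \in \omega}$. Define $g : X \to 2^\omega$ by $g(x)(n) = \mathbf{1}_{U_n}(x)$. Each coordinate is continuous because $U_n$ is clopen, so $g$ is continuous; and $g$ is injective because $X$ is Hausdorff and the $U_n$ form a basis, so any two distinct points of $X$ are separated by some $U_n$.

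Next, define $f : X \to (2^\omega)^\Gamma$ by
\[ f(x)(\gamma) = g(\gamma\inv \cdot x). \]
Continuity of $f$ reduces (by the definition of the product topology) to continuity of each coordinate projection $x \mapsto g(\gamma\inv \cdot x)$, which holds because $g$ is continuous and the action is continuous. For equivariance, compute directly:
\[ f(\delta \cdot x)(\gamma) = g(\gamma\inv \delta \cdot x) = g((\delta\inv \gamma)\inv \cdot x) = f(x)(\delta\inv \gamma) = (\delta \cdot f(x))(\gamma). \]
Injectivity follows by evaluating at $\gamma = 1$: if $f(x) = f(y)$ then $g(x) = g(y)$, hence $x = y$ by injectivity of $g$.

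Finally, suppose $\Gamma \acts X$ is free and pick any $x \in X$. If $\delta \in \Gamma$ stabilizes $f(x)$, then evaluating at $\gamma = 1$ gives $g(x) = (\delta \cdot f(x))(1) = f(x)(\delta\inv) = g(\delta \cdot x)$, so by injectivity of $g$ we get $\delta \cdot x = x$, and by freeness $\delta = 1$. Hence $f(x) \in F(\Gamma, 2^\omega)$.

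There is no real obstacle here: the only substantive input is the existence of a continuous injection $X \to 2^\omega$, which is a standard consequence of second countability and $0$-dimensionality, and the rest is bookkeeping with the Bernoulli shift formula $\gamma(x)(\delta) = x(\gamma\inv \delta)$.
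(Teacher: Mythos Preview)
Your proof is correct and follows essentially the same approach as the paper: build a continuous injection $g : X \to 2^\omega$ from a countable clopen basis, then set $f(x)(\gamma) = g(\gamma^{-1} \cdot x)$. The paper's proof is terser (it just says ``the second part is automatic by equivariance''), whereas you spell out the equivariance, injectivity, and freeness verifications explicitly, but the underlying argument is identical.
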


\begin{proof}
    Let $g : X \to 2^\omega$ be a continuous injection. (For example, let $g(x)(n)$ record whether $x$ is in the $n$-th member of some fixed countable clopen basis for $X$.) Then let $f(x)(\gamma) = g(\gamma\inv x)$. The second part is automatic by equivariance. 
\end{proof}

\begin{prop}
    Let $\Gamma$ be a countable group with a Borel action on a standard Borel space $X$. Then there is a Borel $\Gamma$-equivariant injective map $f : X \to (2^\omega)^\Gamma$. If $\Gamma \acts X$ is free there is such a map $f : X \to F(\Gamma,2^\omega)$. 
\end{prop}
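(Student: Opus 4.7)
The plan is to run the same argument as in the preceding proposition, simply replacing the continuous injection $g : X \to 2^\omega$ with a Borel one. The existence of such a Borel injection is a classical fact of descriptive set theory: every standard Borel space admits a Borel injection into $2^\omega$ (indeed, either $X$ is countable, in which case any injection into $2^\omega$ is Borel, or $X$ is Borel isomorphic to $2^\omega$ by Kuratowski's theorem). Fix such a $g$.

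Then define $f : X \to (2^\omega)^\Gamma$ by the same formula as before, namely
\[ f(x)(\gamma) = g(\gamma\inv \cdot x).\]
I would verify the three required properties in turn. For Borelness, it suffices to check that each coordinate projection $x \mapsto f(x)(\gamma)$ is Borel, which holds since it is the composition of the Borel map $x \mapsto \gamma\inv x$ with the Borel map $g$; then Borelness of $f$ into the product follows from the fact that $(2^\omega)^\Gamma$ has a countable basis generated by clopen sets pulled back from finitely many coordinates. Equivariance is a direct computation: $f(\delta x)(\gamma) = g(\gamma\inv\delta x) = f(x)(\delta\inv\gamma) = (\delta \cdot f(x))(\gamma)$. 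Injectivity is immediate from evaluating at $\gamma = 1$ and using that $g$ is injective.

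For the second part, as noted in the previous proposition the free case is automatic from equivariance together with injectivity: if $x \in X$ has trivial stabilizer and $\gamma \cdot f(x) = f(x)$, then by equivariance $f(\gamma x) = f(x)$, whence $\gamma x = x$ by injectivity, whence $\gamma = 1$. So the image of $f$ lands in $F(\Gamma, 2^\omega)$ whenever $\Gamma \acts X$ is free.

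There is essentially no obstacle here — the only non-trivial input is the existence of a Borel injection $X \to 2^\omega$, which is standard — so the proof should be quite short, with all the work going into the three verifications above.
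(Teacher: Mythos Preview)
Your proposal is correct and follows exactly the paper's approach: the paper's proof is literally ``Let $g : X \to 2^\omega$ be a Borel injection. Then repeat the previous proof.'' Your write-up simply unpacks this with the verifications of Borelness, equivariance, injectivity, and the automatic freeness of the image, all of which are as you describe.
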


\begin{proof}
    Let $g : X \to 2^\omega$ be a Borel injection. Then repeat the previous proof. 
\end{proof}

\begin{cor}\label{cor:shift_LCL_universal}
    Let $\Gamma$ be a countable group and $\Pi$ an LCL on $\Gamma$. $\Pi \in \BOREL(\Gamma)$ if and only if $F(\Gamma,2^\omega)$ admits a Borel $\Pi$-labeling. Likewise for the classes $\BC_\alpha(\Gamma)$.
\end{cor}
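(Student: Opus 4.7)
The plan is to deduce this corollary directly from the two preceding propositions, together with the observation that a $\Pi$-labeling pulls back along a $\Gamma$-equivariant map to a $\Pi$-labeling.

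The forward direction is immediate: if $\Pi \in \BOREL(\Gamma)$, then applying the definition to the free continuous (hence Borel) action $\Gamma \acts F(\Gamma,2^\omega)$ on the standard Borel space $F(\Gamma, 2^\omega)$ produces the required Borel $\Pi$-labeling. The same argument works in the $\BC_\alpha$ case once one notes that $F(\Gamma,2^\omega)$ is 0-dimensional and Polish (the product $(2^\omega)^\Gamma$ is 0-dimensional Polish, and the free part is a $G_\delta$ subspace), so it is an instance of the type of action appearing in the definition of $\BC_\alpha(\Gamma)$.

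For the converse, suppose $f : F(\Gamma,2^\omega) \to \Lambda$ is a Borel $\Pi$-labeling and let $\Gamma \acts X$ be any free Borel action on a standard Borel space. By the second proposition, there is a Borel $\Gamma$-equivariant injection $\iota : X \to F(\Gamma,2^\omega)$. I would check that $f \circ \iota$ is a Borel $\Pi$-labeling of $X$: Borelness is immediate from composition, and for each $x \in X$ the function $\gamma \mapsto (f\circ\iota)(\gamma \cdot x)$ equals $\gamma \mapsto f(\gamma \cdot \iota(x))$ by equivariance, which lies in $\mathcal{A}$ because $f$ is a $\Pi$-labeling. For the $\BC_\alpha$ version, one instead starts with a free continuous $\Gamma \acts X$ on a 0-dimensional Polish space and invokes the first proposition to obtain a \emph{continuous} equivariant injection $\iota : X \to F(\Gamma,2^\omega)$. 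Since $\Pi$-labelings of $F(\Gamma,2^\omega)$ to a finite discrete set $\Lambda$ are Baire class $\alpha$ iff preimages of points are $\Del^0_{1+\alpha}$, and preimages of $\Del^0_{1+\alpha}$ sets under continuous maps are $\Del^0_{1+\alpha}$, the pullback $f \circ \iota$ is again Baire class $\alpha$.

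There is no real obstacle here; the only thing to be careful about is confirming that $F(\Gamma,2^\omega)$ qualifies as an instance in each of the relevant definitions (free, Borel, and in the continuous case, 0-dimensional Polish), and that pullback preserves the $\Pi$-labeling property (which uses $\Gamma$-equivariance) as well as the relevant descriptive-set-theoretic complexity class.
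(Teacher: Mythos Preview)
Your proposal is correct and is exactly the argument the paper has in mind: the corollary is stated without proof precisely because it follows immediately from the two preceding propositions by pulling back a $\Pi$-labeling along the equivariant map, just as you describe. Your care in verifying that $F(\Gamma,2^\omega)$ is a legitimate instance (free, Borel, and 0-dimensional Polish) and that continuous pullback preserves the Baire class is appropriate and fills in the only details one might want to check.
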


Recall that this made Question \ref{q:main} well-defined. 
It also allows us to treat all the non-abelian free groups as equivalent from this perspective via the following observations.

\begin{prop}
    Suppose $\Delta \leq \Gamma$ are countable groups. There is a continuous $\Delta$-equivariant map $f : (2^\omega)^\Delta \to (2^\omega)^\Gamma$ with $f[F(\Delta,2^\omega)] \subseteq F(\Gamma,2^\omega)$. 
\end{prop}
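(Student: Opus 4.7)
The plan is to reduce the problem to choosing a transversal and then duplicating the Bernoulli sample along cosets with a coset tag to preserve freeness. Concretely, I would fix a right transversal $T \subseteq \Gamma$ for $\Delta$ with $1 \in T$, so that every $\gamma \in \Gamma$ has a unique factorization $\gamma = \pi(\gamma)\,\tau(\gamma)$ with $\pi(\gamma) \in \Delta$ and $\tau(\gamma) \in T$. Since $\Gamma$ is countable, so is $T$, so I may fix an injection $\iota : T \to 2^\omega$ and a homeomorphism $\langle \cdot,\cdot \rangle : 2^\omega \times 2^\omega \to 2^\omega$. I would define
\[ f(x)(\gamma) \;=\; \langle \iota(\tau(\gamma)),\, x(\pi(\gamma))\rangle. \]

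Continuity of $f$ is then automatic, since each coordinate $f(x)(\gamma)$ depends continuously on the single coordinate $x(\pi(\gamma))$ of $x$. The $\Delta$-equivariance verification boils down to the observation that if $\delta \in \Delta$, then $\pi(\delta\inv \gamma) = \delta\inv \pi(\gamma)$ while $\tau(\delta\inv \gamma) = \tau(\gamma)$; plugging in, both $f(\delta \cdot x)(\gamma)$ and $(\delta \cdot f(x))(\gamma)$ reduce to $\langle \iota(\tau(\gamma)),\, x(\delta\inv \pi(\gamma))\rangle$.

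For the statement about free parts, I would argue that any $\gamma \in \Gamma \setminus \{1\}$ stabilizing $f(x)$ must actually lie in $\Delta$ and stabilize $x$ under the $\Delta$-shift. Evaluating the stabilizer identity $f(x)(\gamma\inv \gamma') = f(x)(\gamma')$ at $\gamma' = 1$ and comparing first coordinates forces $\tau(\gamma\inv) = \tau(1) = 1$, i.e.\ $\gamma \in \Delta$; then restricting the identity to $\gamma' \in \Delta$ reads off as $x(\gamma\inv \gamma') = x(\gamma')$ for all $\gamma' \in \Delta$, i.e.\ $\gamma$ stabilizes $x$ under the Bernoulli shift $\Delta \acts (2^\omega)^\Delta$, contradicting $x \in F(\Delta, 2^\omega)$.

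There is no real obstacle here; the only substantive design choice is the inclusion of the tag $\iota(\tau(\gamma))$. Without it, $f(x)(\gamma)$ would depend only on $\pi(\gamma)$, hence only on the $\Delta$-coset of $\gamma$, and every nontrivial element of $T$ would stabilize $f(x)$, killing the free-part claim. The tag is exactly what separates the cosets and rescues freeness.
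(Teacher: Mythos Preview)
Your proof is correct, but it takes a more elaborate route than the paper's. The paper simply sets $f(x)(\gamma) = 1^\frown x(\gamma)$ for $\gamma \in \Delta$ and $f(x)(\gamma) = 0^\omega$ otherwise: a single first-bit flag distinguishes $\Delta$-coordinates from the rest, and the freeness argument reduces to observing that $\gamma \cdot f(x) = f(x)$ with $\gamma \notin \Delta$ would force a coordinate starting with $1$ to equal one starting with $0$. Your construction instead replicates $x$ along every right coset and tags each copy with an injection $\iota : T \to 2^\omega$ via a pairing homeomorphism; your freeness argument then reads off $\gamma \in \Delta$ from the tag and recovers $\gamma \cdot x = x$ from the second component. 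Both arguments are fine and short; the paper's is more minimal (no transversal, no pairing), while yours makes the ``separate the cosets'' idea explicit and would generalize more readily if one wanted, say, the image to meet every $\Delta$-coset nontrivially or to land in a shift over a different alphabet.
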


\begin{proof}
    Let 
    \[ f(x)(\gamma) = \begin{cases}
        1^\frown x(\gamma) & \textnormal{if } \gamma \in \Delta \\ 0^\omega & \textnormal{else}
    \end{cases}, \]
    where $0^\omega \in 2^\omega$ denotes the constant 0 sequence, and $1^\frown x \in 2^\omega$ denotes the sequence $x$ with a 1 added to the front. This is clearly $\Delta$-equivariant, injective, and continuous. 

    Suppose $x \in F(\Delta,2^\omega)$ and $\gamma \in \Gamma$ with $\gamma f(x) = f(x)$. If $\gamma \notin \Delta$ then $(\gamma f(x))(\gamma) = f(x)(1) = 1^\frown x(1)$ while $f(x)(\gamma) = 0^\omega$, so these cannot be equal as their first bits differ. Thus $\gamma \in \Delta$, in which case $\gamma = 1$ by equivariance and injectivity. 
\end{proof}

Observe that if $\Delta \leq \Gamma$ and $\Pi = (\Lambda,S,\mathcal{A})$ is an LCL on $\Delta$, then $\Pi$ is also an LCL on $\Gamma$. Furthermore if $\Gamma \acts X$ is a free action and $f : X \to \Lambda$ is a function, then $f$ is a $\Pi$-labeling with respect to $\Gamma$ if and only if it is a $\Pi$-labeling with respect to the induced $\Delta$-action. 

\begin{cor}\label{cor:subgroup_LCL}
    Let $\Delta \leq \Gamma$ be countable groups and $\Pi$ an LCL on $\Delta$. Then $\Pi \in \BOREL(\Delta) \Leftrightarrow \Pi \in \BOREL(\Gamma)$, and likewise for the classes $\BC_\alpha$. 
\end{cor}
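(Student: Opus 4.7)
The proof splits into two directions, one of which is essentially immediate from the definitions and the other of which invokes the universality of the Bernoulli shift (Corollary \ref{cor:shift_LCL_universal}) together with the previous proposition providing a $\Delta$-equivariant embedding $F(\Delta,2^\omega) \hookrightarrow F(\Gamma,2^\omega)$.

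For the ``$\Leftarrow$'' direction, suppose $\Pi \in \BOREL(\Gamma)$. By Corollary \ref{cor:shift_LCL_universal} applied with $\Delta$ in place of $\Gamma$, it suffices to produce a Borel $\Pi$-labeling of $F(\Delta,2^\omega)$. Let $f : F(\Delta,2^\omega) \to F(\Gamma,2^\omega)$ be the continuous, injective, $\Delta$-equivariant map from the previous proposition, and let $g : F(\Gamma,2^\omega) \to \Lambda$ be a Borel $\Pi$-labeling (with respect to $\Gamma$) which exists since $\Pi \in \BOREL(\Gamma)$. I claim $g \circ f$ works. It is Borel as a composition of a continuous and a Borel function. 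To see it is a $\Pi$-labeling of $F(\Delta,2^\omega)$, fix $x \in F(\Delta,2^\omega)$; by $\Delta$-equivariance of $f$ we have $(g \circ f)(\delta \cdot x) = g(\delta \cdot f(x))$ for every $\delta \in \Delta$. Since the window $S$ of $\Pi$ lies in $\Delta \subseteq \Gamma$, the tuple $(\delta \mapsto g(\delta \cdot f(x)))_{\delta \in S}$ equals the corresponding tuple witnessing that $g$ is a $\Pi$-labeling of $F(\Gamma,2^\omega)$ at the point $f(x)$, which lies in $\mathcal{A}$.

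For the ``$\Rightarrow$'' direction, suppose $\Pi \in \BOREL(\Delta)$ and let $\Gamma \acts X$ be a free Borel action on a standard Borel space. Restriction yields a free Borel action $\Delta \acts X$, so there is a Borel $\Pi$-labeling $h : X \to \Lambda$ with respect to this $\Delta$-action. By the observation preceding the corollary (the window $S \subseteq \Delta$ makes the labeling condition identical for the two actions), $h$ is also a $\Pi$-labeling with respect to $\Gamma \acts X$.

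The $\BC_\alpha$ case is handled by exactly the same two arguments; the only thing to verify is that the definability is preserved. In the ``$\Leftarrow$'' direction, if $g \in \B_\alpha$, then $g \circ f \in \B_\alpha$ because $f$ is continuous and preimages under continuous maps preserve the pointclasses $\Sig^0_{1+\alpha}$. In the ``$\Rightarrow$'' direction, the restricted $\Delta$-action on $X$ is still continuous on the 0-dimensional Polish space $X$, so a Baire class $\alpha$ $\Pi$-labeling exists and serves equally well for $\Gamma$. There is no serious obstacle here: the entire content of the corollary is the combination of the embedding lemma with the universality corollary, and the most delicate step is simply remembering that the labeling condition is insensitive to enlarging the ambient group so long as the window stays inside $\Delta$.
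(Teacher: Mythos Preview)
Your proof is correct and follows essentially the same route as the paper: the forward direction is handled by restricting a free $\Gamma$-action to $\Delta$ and invoking the observation that $\Pi$-labelings coincide for the two actions, while the backward direction pulls a labeling of $F(\Gamma,2^\omega)$ back through the continuous $\Delta$-equivariant embedding $F(\Delta,2^\omega) \hookrightarrow F(\Gamma,2^\omega)$ and appeals to Corollary~\ref{cor:shift_LCL_universal}. The paper's version is simply more terse.
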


\begin{proof}
    The forward direction is immediate from the previous paragraph. On the other hand, if $F(\Gamma,2^\omega)$ has a Baire-class-$\alpha$ $\Pi$-labeling, then by the proposition so does $F(\Delta,2^\omega)$, so we are done by Corollary \ref{cor:shift_LCL_universal}. 
\end{proof}

\section{Complexity and 2-colorings}

Before proving the main theorem, we will need some understanding of how one of the most basic ``difficult'' LCLs, proper 2-coloring of $\Z$-actions, interacts with complexity. In this section we fix the generating set $S = \{1\}$ for $\Z$, and proper 2-coloring refers to the proper 2-coloring problem for $\Z$-actions with respect to this generating set, as defined in the example following Definition \ref{def:LCL}. It is well known that this LCL is not in $\BOREL(\Z)$ or even $\BAIRE(\Z)$ or $\MEAS(\Z)$ \cite{kst}. In fact something a bit stronger is true. 

\begin{definition}
    Let $\mathcal{I}$ be the $\sigma$-ideal consisting of $\Z$-invariant Borel sets $B \subseteq F(\Z,2^\omega)$ such that $B$ admits a Borel 2-coloring. 
\end{definition}

The hardness results mentioned above proceed by showing that members of $\mathcal{I}$ must be very small in the appropriate way. For instance, they must be meager with respect to any compatible Polish topology on $2^\omega$ and null with respect to any non-Dirac Borel probability measure on $2^\omega$. We include a proof of the former since we will need it later.

\begin{prop}\label{prop:2_color_meager}
    Let $A \in \mathcal{I}$ and $\tau$ a Polish topology on $2^\omega$ generating the same Borel sets as the usual topology. Then $A$ is meager in $(2^\omega)^\Z$ with respect to the product topology of $\tau$. 
\end{prop}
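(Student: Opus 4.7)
The plan is to argue by contradiction. Suppose $A$ is non-meager in $(2^\omega)^\Z$ with the $\tau$-product topology, and let $c : A \to 2$ be a Borel 2-coloring. Set $B = c^{-1}(0)$ and $C = c^{-1}(1)$, so $A = B \sqcup C$. Since $c$ is a proper 2-coloring, $c(1 \cdot x) = 1 - c(x)$ for all $x \in A$, which gives $1 \cdot B = C$ and hence that both $B$ and $C$ are invariant under the subgroup $2\Z \leq \Z$. Because $\tau$ is Polish and generates the usual Borel $\sigma$-algebra on $2^\omega$, the $\tau$-product topology on $(2^\omega)^\Z$ is Polish and has the same Borel sets as the usual product topology; in particular $B$ and $C$ are Borel and thus have the Baire property in this topology.

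The key step is to establish that the shift action of $2\Z$ on $(2^\omega)^\Z$ is topologically transitive in the $\tau$-product topology. Any non-empty basic open set is determined by a finite set of coordinates with non-empty $\tau$-open constraints: given two such sets $U_1, U_2$ with coordinate supports $I_1, I_2 \subset \Z$, choosing any sufficiently large even $n$ makes $I_1 + n$ disjoint from $I_2$, so the constraints defining $(n \cdot U_1) \cap U_2$ become independent and jointly satisfiable. A standard Baire category argument then yields generic ergodicity: any $2\Z$-invariant set with the Baire property is meager or comeager. Indeed, if $B$ has the Baire property and is non-meager then $B$ agrees modulo a meager set with some non-empty open $U$, and by $2\Z$-invariance the set $\bigcup_{n \in 2\Z} n \cdot U$ is open, dense by topological transitivity, and hence comeager, with $B$ comeager in it.

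Applying this dichotomy to $B$, we conclude $B$ is meager or comeager. If $B$ is meager then so is $C = 1 \cdot B$, since $1 \cdot$ is a homeomorphism of $(2^\omega)^\Z$, and hence $A = B \cup C$ is meager, contradicting our hypothesis. Otherwise $B$ is comeager, in which case $C = 1 \cdot B$ is also comeager, so $B \cap C$ is a comeager subset of a Baire space and is therefore non-empty, contradicting $B \cap C = \emptyset$. The main technical point is verifying topological transitivity in the possibly unfamiliar $\tau$-product topology; however, since the argument only uses the finite-support structure of basic open sets and not any particular form of $\tau$, it transports from the usual product topology without modification.
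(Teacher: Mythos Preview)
Your proof is correct and follows essentially the same Baire-category route as the paper: both argue by contradiction, use that the Borel sets (hence BP) agree in the $\tau$-product topology, and exploit the finite-support structure of basic open sets to translate and obtain a contradiction from $1\cdot c^{-1}(0)=c^{-1}(1)$. The only cosmetic difference is packaging: you first isolate generic ergodicity of the $2\Z$-shift and then apply the meager/comeager dichotomy, whereas the paper localizes $c^{-1}(0)$ in a basic open box $U$ and shifts by a single odd integer $2N+1$ to make $U\cap(2N+1)U\neq\emptyset$ directly.
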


Note that $F(\Z,2^\omega)$ is dense $G_\delta$ in $(2^\omega)^\Z$, and so it does not matter which of these spaces we work in. 

\begin{proof}
    Let $c : A \to 2$ be a Borel 2-coloring, and suppose to the contrary $A$ is nonmeager. Without loss of generality $c\inv(0)$ is nonmeager. Then, since it is Borel, it is comeager on some basic open set of the form $U = \prod _{n \in \Z} U_n$, where $U_n \in \tau$ is nonempty open and $U_n = 2^\omega$ for $|n| > N$ for some $N \in \N$. By choice of $N$, the translate $(2N+1)U$ meets $U$, and then $c\inv(0) \cap (2N+1)c\inv(0)$ is comeager in the nonempty open set $(2N+1)U \cap U$. However, by the way 2-colorings work we have $kc\inv(0) = c\inv(1)$ for any odd integer $k$, so the former intersection is empty.
\end{proof}

The main result of this section, which may be of independent interest, says that members of $\mathcal{I}$ must also be negligible in terms of their ability to affect complexity. 

\begin{theorem}\label{thm:2_color_complexity}
    Let $0 <\alpha < \omega_1$. There is a $\Sig^0_\alpha$-complete set $U_\alpha \subseteq 2^\omega$ with the following property: For all $A \in \mathcal{I}$, $F(\Z,U_\alpha) \setminus A$ is $\boldPi^0_{\alpha+1}$-hard as a subset of $F(\Z,2^\omega)$. 
\end{theorem}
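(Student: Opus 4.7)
My first move would be to fix the shape of $U_\alpha$. Let $V_\alpha \subseteq 2^\omega$ be any $\Sig^0_\alpha$-complete set and define
\[ U_\alpha = \{x \in 2^\omega : (x(2k))_{k \in \omega} \in V_\alpha\}, \]
so that $U_\alpha$ is $\Sig^0_\alpha$-complete but membership depends only on the even-indexed bits of $x$; the odd bits are free parameters. This tail invariance is the key structural feature I would exploit throughout.

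Next, the standard $\boldPi^0_{\alpha+1}$-hardness part of the argument. Given $B \in \boldPi^0_{\alpha+1}(2^\omega)$, write $B = \bigcap_{n \in \omega} C_n$ with $C_n \in \Sig^0_\alpha$, and pick continuous $g_n : 2^\omega \to 2^\omega$ with $g_n\inv(V_\alpha) = C_n$. I would build a continuous $\iota : 2^\omega \to F(\Z, 2^\omega)$ whose $n$-th coordinate, for $n \geq 0$, interleaves the bits of $g_n(x)$ (into the even positions) with position-dependent marker bits (into the odd positions) chosen to force $\iota(x)$ to be free; for $n < 0$, use a fixed element of $V_\alpha$ in the even slots and similar markers in the odd slots. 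Then $\iota\inv(F(\Z, U_\alpha)) = B$, witnessing $\boldPi^0_{\alpha+1}$-hardness of $F(\Z, U_\alpha)$ in $F(\Z, 2^\omega)$ when $A = \emptyset$.

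To handle a general $A \in \mathcal{I}$, I would use Proposition~\ref{prop:2_color_meager}: $A$ is meager in $(2^\omega)^\Z$, so there is a sequence $F_0 \subseteq F_1 \subseteq \cdots$ of closed nowhere dense subsets of $(2^\omega)^\Z$ with $A \subseteq \bigcup_k F_k$. I would redo the construction of $\iota$ as a Cantor scheme: assign to each $s \in 2^{<\omega}$ a nonempty clopen box $W_s \subseteq (2^\omega)^\Z$, with $W_s \supseteq W_t$ whenever $s \subseteq t$, diameters tending to $0$, such that (a)~$W_s \cap F_{|s|} = \emptyset$, and (b)~the $W_s$ encode a length-$|s|$ approximation to the reduction above. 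Taking $\iota(x)$ to be the unique point of $\bigcap_n W_{x \res n}$ yields a continuous map with $\iota(2^\omega) \cap A = \emptyset$ while still satisfying $\iota\inv(F(\Z,U_\alpha)) = B$, which is the required reduction.

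The main obstacle will be ensuring that (a) and (b) can be jointly realized at each refinement step. This is where the tail invariance of $U_\alpha$ pays off: the coordinates of $\iota(x)$ that determine membership in $F(\Z, U_\alpha)$ live in the sparse even bits, so we can always shrink $W_s$ away from the nowhere dense $F_{|s|}$ by tweaking the odd bits, leaving the reduction structure in the even bits undisturbed. The combinatorial bookkeeping is cleanest by induction on $\alpha$, with the inductive hypothesis providing nested $\boldPi^0_\alpha$-reductions that interleave with the $A$-avoidance step without interference.
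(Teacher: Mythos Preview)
There is a real gap at the step where you claim you can ``shrink $W_s$ away from the nowhere dense $F_{|s|}$ by tweaking the odd bits.'' A set that is closed nowhere dense in the full product topology on $(2^\omega)^\Z$ need not be nowhere dense---or even proper---once you freeze the even bits and only allow the odd bits to vary. Concretely, take
\[
A \;=\; \bigl\{\, y \in F(\Z,2^\omega) : \forall n\in\Z,\ y(n)(0)\neq y(n+1)(0)\,\bigr\}.
\]
This $A$ is closed, $\Z$-invariant, and Borel $2$-colorable via $c(y)=y(0)(0)$, so $A\in\mathcal I$. But bit $0$ is an \emph{even} bit in your scheme, so for any even-part configuration $z$ that already satisfies the alternating condition and is free, the odd-bit slice of $A$ over $z$ is the entire fiber; no amount of tweaking odd bits will get you out of $A$. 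You can dodge this particular $A$ by hard-coding (say) $g_n(x)(0)=0$, but the same trick rebounds at any other even index, and your proposal gives no mechanism that works uniformly over $A\in\mathcal I$. The underlying issue is that the range of your intended reduction is forced to sit inside $(V_\alpha)^\Z$ in the even coordinates, and for $\alpha\ge 2$ this is typically a meager set in the standard topology---so the meagerness of $A$ in that same topology gives you no traction.

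The paper's proof resolves exactly this tension, but by changing the topology rather than by an explicit Cantor scheme. It takes $U_\alpha$ to be the complement of the canonical $\boldPi^0_\alpha$ set $P_\alpha$ and invokes Matrai's Hurewicz test pairs: there is a finer Polish topology $\tau_\alpha$ (same Borel sets) in which $U_\alpha$ is \emph{open dense}. Proposition~\ref{prop:2_color_meager} applies to any compatible Polish topology, so $A$ is meager in the product topology $\tau_{\alpha+1}^<=\prod\tau_\alpha$, while $F(\Z,U_\alpha)$ is comeager there; hence $F(\Z,U_\alpha)\setminus A$ is $\tau_{\alpha+1}^<$-comeager. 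On the other hand it sits inside $P_{\alpha+1}$, which is $\tau_{\alpha+1}$-nowhere dense. Matrai's transfer lemma (a $\Sig^0_{\alpha+1}$ set nonmeager in $\tau_{\alpha+1}^<$ is nonmeager in $\tau_{\alpha+1}$) together with Wadge determinacy then forces $\boldPi^0_{\alpha+1}$-hardness. In effect, Matrai's machinery supplies the ``right'' notion of largeness in which both $U_\alpha^\Z$ is large and $A$ is small simultaneously---precisely what your even/odd split was trying, and failing, to achieve in the standard topology.
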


\begin{remark}
\begin{enumerate}
    \item In the proof of Theorem \ref{thm:main_hierarchy} we will only need this for finite $\alpha$.
    \item Since $F(\Z,U_\alpha) = F(\Z,2^\omega) \cap (U_\alpha)^\Z$ and $F(\Z,2^\omega)$ is $G_\delta$ in $(2^\omega)^\Z$, it is not important in the statement above whether we use $F(\Z,U_\alpha)$ and $F(\Z,2^\omega)$ or $(U_\alpha)^\Z$ and $(2^\omega)^\Z$ in their respective places. 
    \item It is also not important that we use a particular $\Sig^0_\alpha$-complete set; the theorem remains true if $U_\alpha$ is replaced by any $\Sig^0_\alpha$-hard set. This introduces some annoyances in maintaining freeness for small values of $\alpha$, so we content ourselves with this superficially weaker statement. 
\end{enumerate}
\end{remark}

Our proof will make use of machinery developed by Matrai \cite{matrai2007covering} for proving lower bounds in the Borel hierarchy using Baire category criteria. As part of this machinery, Matrai constructs pairs of subsets of $2^\omega$ and topologies on $2^\omega$ refining the usual one which he calls \textit{Topological Hurewicz test paris}. We will not give the general definition of these test pairs, but just collect in the following Lemma the aspects of the theory in the succesor case which we will use. We start with the sets, which are canonical. 

\begin{definition}\label{def:canonical_sets}
    We define pairs of sets $P_\alpha \subseteq X_\alpha$ by induction on $\alpha < \omega_1$. Let $X_0 = 2$ and $P_0 = \{0\} \subseteq X_0$. 
    
    For $0<\alpha < \omega_1$, fix a cofinal sequence $(\beta_i)_{i \in \omega}$ in $\alpha$ in the case where $\alpha$ is a limit, and let each $\beta_i$ be the predecessor of $\alpha$ in the case where $\alpha$ is a sucessor. Then define
    \begin{itemize}
        \item $X_{\alpha} = \prod_{i < \omega} X_{\beta_i}$.
        \item $P_\alpha = \{x \in X_\alpha \mid \forall i < \omega,\ x(i) \not\in P_{\beta_i}$\}. 
    \end{itemize}
\end{definition}

Note that if we give $X_0$ the discrete topology and each $X_\alpha$ the product topology, then $X_\alpha$ is homeomorphic to $2^\omega$ for each $\alpha > 0$, and each $P_\alpha$ is $\boldPi^0_\alpha$-complete in $X_\alpha$. Call this topology on $X_\alpha$ $\sigma_\alpha$.

\begin{lemma}[Matrai \cite{matrai2007covering}]\label{lem:matrai_properties}
    For each $0 < \alpha < \omega_1$ there are Polish topologies $\sigma_\alpha \subseteq \tau_{\alpha}^< \subseteq \tau_\alpha$ with the following properties.
    \begin{enumerate}
        \item $P_\alpha$ is closed nowhere dense in $\tau_\alpha$. 
        \item $\tau_{\alpha}^<$ is the product topology with respect to the $\tau_{\beta_i}$'s.  
        \item If $B \subseteq X_\alpha$ is $\Sig^0_\alpha(\sigma_\alpha)$ and $\tau_\alpha^<$-nonmeager then it is $\tau_\alpha$-nonmeager.
    \end{enumerate}
\end{lemma}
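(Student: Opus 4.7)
The plan is induction on $\alpha$, building $\sigma_\alpha \subseteq \tau_\alpha^< \subseteq \tau_\alpha$ simultaneously and using the construction at lower levels as a black box. For the base case $\alpha = 1$ one has $P_1 = \{1^\omega\}$, a single point, so taking $\tau_1 = \tau_1^< = \sigma_1$ already works: $P_1$ is closed nowhere dense and clause (3) is vacuous. For $\alpha > 1$, clause (2) forces $\tau_\alpha^<$ to be the product topology of the $\tau_{\beta_i}$'s on $X_\alpha = \prod_i X_{\beta_i}$, so the substance is in choosing $\tau_\alpha$.

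To obtain $\tau_\alpha$ I would refine coordinate-wise by declaring each $P_{\beta_i}$ to be clopen in a refined factor topology $\tau_{\beta_i}^*$, and take $\tau_\alpha$ to be the product of the $\tau_{\beta_i}^*$'s. Since $P_{\beta_i}$ is $\tau_{\beta_i}$-closed by the inductive clause (1), the standard Kuratowski lemma on refining a Polish topology by a closed set keeps each $\tau_{\beta_i}^*$ Polish, and hence $\tau_\alpha$ is Polish. With the $P_{\beta_i}$'s now clopen, every nonempty basic $\tau_\alpha$-box contains, in some coordinate $i$ outside its finite support, an open slice lying entirely inside $P_{\beta_i}$; such a slice misses $P_\alpha = \{x \mid \forall i,\ x(i) \notin P_{\beta_i}\}$, giving clause (1). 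Some further bookkeeping may be necessary to reconcile this choice of $\tau_\alpha$ with clause (3), but the guiding idea is to turn the ``forbidden'' sets $P_{\beta_i}$ into visible clopen escape hatches.

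The main obstacle is clause (3), which is really the whole point of the test-pair machinery and the delicate part of Matrai's construction. Given $B \subseteq X_\alpha$ that is $\Sig^0_\alpha(\sigma_\alpha)$ and $\tau_\alpha^<$-nonmeager, I would decompose $B = \bigcup_n B_n$ into $\sigma_\alpha$-$\boldPi^0_{\beta_n}$ (or $\boldPi^0_\beta$, in the successor case) pieces, fix an $n$ and a $\tau_\alpha^<$-basic box $U = \prod_i U_i$ in which $B_n$ is $\tau_\alpha^<$-comeager, and then invoke the inductive clause (3) in each coordinate to argue that passing from $\tau_{\beta_i}$ to $\tau_{\beta_i}^*$ cannot turn the relevant slices of $B_n$ into meager sets. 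A Kuratowski--Ulam-style Fubini argument for Baire category on countable products then recombines these slice-wise statements into $\tau_\alpha$-nonmeagerness of $B_n$, hence of $B$.

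The hardest part of carrying this out is threading the induction through the limit case, where infinitely many factor topologies change simultaneously and where the cofinal sequence $(\beta_i)$ introduces genuinely new descriptive complexity at every coordinate. Keeping the Baire-category bookkeeping consistent across these simultaneous refinements, while also maintaining that $P_\alpha$ remains closed and nowhere dense and that the $\Sig^0_\alpha(\sigma_\alpha)$ hypothesis is strong enough to survive all of the coordinate-wise invocations of the inductive (3), is where I would expect to need to follow Matrai's setup essentially verbatim rather than reinvent it.
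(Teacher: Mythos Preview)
The paper does not prove this lemma at all: it is stated as a black-box citation, with the sentence immediately following it saying that items (1)--(3) come from Definition 15(3), Proposition 36, and Corollary 23(2) of Matrai's paper \cite{matrai2007covering} respectively. So there is no argument in the paper to compare your attempt against.

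Your sketch is in the right spirit---induction on $\alpha$, with $\tau_\alpha$ obtained from $\tau_\alpha^<$ by refining each factor so as to make $P_{\beta_i}$ clopen---and this is indeed roughly how Matrai's topological Hurewicz test pairs are built. Your base case is fine. However, your treatment of clause (3) is not a proof but a wish list: you say you would ``invoke the inductive clause (3) in each coordinate'' together with a Kuratowski--Ulam argument, but the inductive hypothesis concerns $\tau_{\beta_i}^<$ versus $\tau_{\beta_i}$, whereas what you need at the new level is control over $\tau_{\beta_i}$ versus your refined $\tau_{\beta_i}^*$, and you never explain why the $\Sig^0_\alpha(\sigma_\alpha)$ hypothesis on $B$ is what makes the product Fubini argument go through. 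You yourself concede in your last paragraph that at this point you would ``follow Matrai's setup essentially verbatim,'' which is exactly what the paper does by citing him. So in effect you and the paper agree: the construction is Matrai's, and the substance of (3) lives there.
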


Definition \ref{def:canonical_sets} is Definition 35 from \cite{matrai2007covering}. Then items (1)-(3) in the Lemma are from Definition 15(3), Proposition 36, and Corollary 23(2) respectively in \cite{matrai2007covering}. (3) is key and delivers on the promise of being able to obtain lower bounds using Baire category.

\begin{proof}[Proof of Theorem \ref{thm:2_color_complexity}]
    We identify $(X_\alpha,\sigma_\alpha)$ with $2^\omega$ with its usual topology and take $U_\alpha = X_\alpha \setminus P_\alpha$. We also identify $X_{\alpha+1} := X_\alpha^\omega$ with $X_\alpha^\Z$ by way of a bijection between $\omega$ and $\Z$. Let $A \in \mathcal{I}$. 

    By Lemma \ref{lem:matrai_properties}(3) together with Wadge determinacy, it suffices to show $F(\Z, U_\alpha) \setminus A$ is $\tau_{\alpha+1}^<$-comeager and $\tau_{\alpha+1}$-meager. For the former, recall that $\tau_{\alpha+1}^<$ is the product topology with respect to $\tau_\alpha$, so $A$ is meager in it by Proposition \ref{prop:2_color_meager}, while $U_\alpha^\Z$ is dense $G_\delta$ in it by Lemma \ref{lem:matrai_properties}(1) and then $F(\Z,U_\alpha)$ is dense $G_\delta$ in $U_\alpha^\Z$ as was observed in Section \ref{sec:shift}. For the latter, observe that $F(\Z,U_\alpha) \subseteq U_\alpha^\Z = P_{\alpha+1}$ and apply Lemma \ref{lem:matrai_properties}(1). 
\end{proof}


\section{Proof of the main theorem}

In this section we will prove the following by induction on $n$. 

\begin{theorem}\label{thm:main_strong}
    For each $0<n \in \omega$ there is an LCL $\Pi \in \BC_n(\F_{n+1})$ with label set $\Lambda$ and a subset $\Lambda_* \subseteq \Lambda$ such that for any Borel $\Pi$-labeling $c : F(\F_{n+1},2^\omega) \to \Lambda$, $c\inv(\Lambda_*)$ is $\boldPi^0_{n}$-hard. 
\end{theorem}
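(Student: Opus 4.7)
The plan is induction on $n$, with Theorem \ref{thm:2_color_complexity} providing the mechanism for boosting complexity by one level at each step.

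For the base case $n = 1$, I would exhibit an LCL on $\F_2$ in $\BC_1(\F_2) \setminus \CONT(\F_2)$ with a distinguished label whose preimage is forced to be $\boldPi^0_1$-hard (equivalently, non-open) in any Borel solution. A natural candidate uses labels $\{0, 1, *\}$, coupling a proper 2-coloring constraint along one generator $a$ with a sparsity condition on $*$-labels along the other generator $b$, and setting $\Lambda_* = \{*\}$. A Baire class $1$ solution is produced by propagating 2-colorings outward from an $F_\sigma$ seed set along $a$-orbits, placing sparse $*$-labels to interrupt the 2-coloring where continuity would fail; any continuous solution, however, would force $c\inv(*)$ to be clopen, so its complement would carry a continuous 2-coloring of an invariant open set, contradicting the argument of Proposition \ref{prop:2_color_meager} applied to the $\langle a \rangle$-subaction.

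For the inductive step, assume $\Pi_n$ on $\F_{n+1}$ satisfies the conclusion with label set $\Lambda_n$ and distinguished set $\Lambda_{n,*}$. Construct $\Pi_{n+1}$ on $\F_{n+2} = \F_{n+1} * \langle g \rangle$ by layering a conditional 2-coloring of $g$-orbits on top of $\Pi_n$: labels are pairs in $\Lambda_n \times \{0, 1, \mathrm{abs}\}$, the first coordinate forms a $\Pi_n$-labeling with respect to the first $n+1$ generators, the $g$-generator enforces proper 2-coloring of second coordinates whenever both endpoints are non-$\mathrm{abs}$, and $\mathrm{abs}$ is permitted only when the first coordinate lies in $\Lambda_{n,*}$. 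Set $\Lambda_{n+1,*} = \Lambda_{n,*} \times \{\mathrm{abs}\}$. For the upper bound, apply the inductive hypothesis on the $\F_{n+1}$-subaction of $F(\F_{n+2}, 2^\omega)$ to obtain a Baire class $n$ $\Pi_n$-labeling $c_n$, then at Baire class $n+1$ select $\mathrm{abs}$-labels within $c_n\inv(\Lambda_{n,*})$ to cut every $g$-chain into finite pieces, each of which is then 2-colored uniformly.

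For the lower bound, given any Borel $\Pi_{n+1}$-labeling $c$, the first-coordinate projection $c_n$ is a Borel $\Pi_n$-labeling, so by induction $c_n\inv(\Lambda_{n,*})$ is $\boldPi^0_n$-hard. To upgrade to $\boldPi^0_{n+1}$-hardness of $c\inv(\Lambda_{n+1,*})$, I would exhibit a Wadge reduction from a test set of the form $F(\Z, U_n) \setminus A$ provided by Theorem \ref{thm:2_color_complexity}. The continuous $\Z$-equivariant map $\phi : F(\F_{n+2}, 2^\omega) \to F(\Z, 2^\omega)$ given by $\phi(x)(i) = x(g^i)$ factors the $g$-subaction through the canonical $\Z$-shift; the non-$\mathrm{abs}$ region of $c$ inherits a Borel 2-coloring along $g$-orbits, which forces the corresponding $\phi$-image to be contained in a member $A$ of $\mathcal{I}$. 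Consequently the $\mathrm{abs}$-region must contain the $\phi$-pullback of $F(\Z, U_n) \setminus A$, which is $\boldPi^0_{n+1}$-hard by Theorem \ref{thm:2_color_complexity}. The main obstacle I expect is the upper bound: the $\mathrm{abs}$-labels must sever every $g$-chain into finite pieces while staying within $c_n\inv(\Lambda_{n,*})$ and within the Baire class $n+1$ complexity budget, and this requires $c_n\inv(\Lambda_{n,*})$ to meet every $g$-orbit with sufficient regularity — a property not automatic from the inductive hypothesis, which may require strengthening the inductive claim (e.g., demanding that the distinguished set be a cross-section of every $\Z$-subaction) or enriching $\Pi_n$ with auxiliary constraints that propagate through the induction. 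Balancing the lower-bound demand for a $\boldPi^0_n$-complex distinguished set against the upper-bound demand for a regularly distributed one is the central tension of the proof.
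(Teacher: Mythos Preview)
Your inductive scaffold and the appeal to Theorem \ref{thm:2_color_complexity} are on the right track, but there are two genuine gaps, and the first one you have already put your finger on.

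\textbf{The upper bound and the missing $N$ label.} You correctly observe that nothing guarantees $c_n^{-1}(\Lambda_{n,*})$ meets every $g$-orbit, so your $\mathrm{abs}$ labels may be unable to sever some $g$-chains, leaving an entire $\Z$-orbit to be Borel 2-colored, which is impossible. The paper's resolution is not to strengthen the inductive hypothesis, but to enlarge the label alphabet by one symbol $N$ and change what the distinguished set is. The new first coordinate takes values in $\{0,1,N,*\}$: your $\mathrm{abs}$ is the paper's $*$, and the extra rule is that the set of $N$-points must be $g$-invariant. Now the upper bound is easy: on any $g$-orbit lying entirely in $c_n^{-1}(\Lambda_n \setminus \Lambda_{n,*})$, label every point $N$; on the remaining orbits, 2-color the finite gaps between $*$'s exactly as you describe. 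Crucially, the new distinguished set is $\Lambda'_* = \{N\} \times \Lambda$, not $\Lambda_{n,*} \times \{\mathrm{abs}\}$. So the hard set at level $n+1$ lives in the \emph{complement} of the level-$n$ hard region, not inside it.

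\textbf{The lower bound and the missing use of Marks' Lemma.} Your map $\phi : F(\F_{n+2},2^\omega) \to (2^\omega)^\Z$ goes the wrong way for a Wadge reduction: pulling a $\boldPi^0_{n+1}$-hard set back along a continuous map need not produce a hard set. What is needed is a continuous $\Z$-equivariant map $f : F(\Z,2^\omega) \to F(\F_{n+2},2^\omega)$, and this is exactly what Marks' determinacy lemma (Lemma 2.12 of \cite{marks2017uniformity}) provides. The paper applies that lemma to the set
\[
A = \{x \mid c_n(x) \notin \Lambda_{n,*} \Leftrightarrow x(1)_1 \in U_n\},
\]
a definition which blends the Marks game with the Wadge game against the canonical $\Sig^0_n$-complete set $U_n$ from Theorem \ref{thm:2_color_complexity}. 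In the $\Gamma$-winning alternative one gets a Borel $\Pi_n$-labeling of $F(\Gamma,2^\omega)$ whose $\Lambda_{n,*}$-preimage continuously reduces to $U_n$, contradicting the inductive $\boldPi^0_n$-hardness. In the $\Z$-winning alternative the ``moreover'' clause of Marks' Lemma pins $(d\circ f)^{-1}(*)$ exactly to the complement of $U_n$ in the first coordinate, so $(d\circ f)^{-1}(N) = F(\Z,U_n)\setminus B$ for some $B\in\mathcal{I}$, and Theorem \ref{thm:2_color_complexity} finishes. Your sketch never invokes Marks' Lemma and never explains how $U_n$ enters the picture; both are essential.

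Finally, your base case inherits the same two defects: without an $N$-type invariant label, there is no mechanism forcing $c^{-1}(\Lambda_*)$ to be non-open for an \emph{arbitrary Borel} labeling (your argument only treats continuous $c$), and Marks' Lemma is again what certifies that the $N$-region is nonempty yet has empty interior.
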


The latter property implies $\Pi \not\in \BC_{n-1}(\F_{n+1})$. This proves Theorem \ref{thm:main_hierarchy} since each $\F_{n+1}$ is a subgroup of $\F_2$, so we can apply Corollary \ref{cor:subgroup_LCL}.

\subsection{The base case}

In this section we prove the $n=1$ case of Theorem \ref{thm:main_strong}. This will serve as a warmup for the inductive step, containing many of the key ideas. 

Write $\F_2 = \langle a,b\rangle$. $\Pi$ will be the following LCL on $\F_2$: The label set will be $\Lambda = \{0,1,N,*\} \times \{R,B,G\}$. The window will be $\{a,b\}$. The constraints are the following:
\begin{itemize}
    \item The second coordinates give a 3-coloring of the $b$-orbits. (We think of the colors as red, blue, and green, hence the labels.) That is, $c(x)_2 \neq c(bx)_2$ for all $x$, the subscript 2 denoting projection onto the second coordinate. 
    \item The first coordinate is $*$ if and only if the second coordinate is $R$ or $B$. 
    \item The vertices with first coordinate $N$ are $a$-invariant. That is, for every $x$, $c(x)_1 = N \Leftrightarrow c(ax)_1 = N$. 
    \item The 0 and 1 labels give a partial 2-coloring of the $a$-orbits. That is, for every $x$, we cannot have $c(x)_1 = c(ax)_1 \in \{0,1\}$. 
\end{itemize}
In other words, given a 3-coloring of the $b$-orbits, there are 3 options for how the first coordinates on each $a$-orbit can look. If the orbit contains any $R$ or $B$ points, those must be labeled $*$ and the remaining $G$ vertices must be properly 2-colored by $\{0,1\}$. If the orbit consists of only $G$ points, we can either $2$-color the entire orbit with $\{0,1\}$ or label it all with $N$. These three cases are illustrated in the second, third, and first rows respectively of Figure \ref{fig:base_case}.

\begin{figure}
    \centering

    \begin{tikzpicture}[nodes={circle, draw, minimum size=5mm, inner sep=0pt}]
    \graph [nodes={circle, draw, minimum size=5mm, inner sep=2pt, empty nodes},
            edges={-}]
    {11[label=center:\small$N$, fill = green!50] --
     12[label=center:\small$N$, fill = green!50] --
     13[label=center:\small$N$, fill = green!50] -- 
     14[label=center:\small$N$, fill = green!50] -- 
     15[label=center:\small$N$, fill = green!50] -- 
     16[label=center:\small$N$, fill = green!50] -- 
     17[label=center:\small$N$, fill = green!50];
    
    1[label=center:\small$*$, fill = red!50] --
     2[label=center:\small$0$, fill = green!50] --
     3[label=center:\small$1$, fill = green!50] -- 
     4[label=center:\small$0$, fill = green!50] -- 
     5[label=center:\small$*$, fill = blue!50] -- 
     6[label=center:\small$1$, fill = green!50] -- 
     7[label=center:\small$*$, fill = red!50];

     21[label=center:\small$0$, fill = green!50] --
     22[label=center:\small$1$, fill = green!50] --
     23[label=center:\small$0$, fill = green!50] -- 
     24[label=center:\small$1$, fill = green!50] -- 
     25[label=center:\small$0$, fill = green!50] -- 
     26[label=center:\small$1$, fill = green!50] -- 
     27[label=center:\small$0$, fill = green!50];

     17 -- 7 --27;
     };
     \node (A)[above of = 17, fill = blue!50] {\small$*$};
     \node (B)[below of = 27, fill = red!50] {\small$*$};

     \draw (A) -- (17); 
     \draw (B) -- (27);

     \draw[->] (0, -3) -- (2, -3) node[draw = none, midway, above] {\small$a$}; 
    
    \draw[->] (7, -1) -- (7, 1) node[draw = none, midway, right] {\small$b$}; 
     \end{tikzpicture}
    \caption{A part of a Schreier graph of $\F_2$ with a $\Pi$-labeling in the base case $n = 1$. The $a$ and $b$ orbits are drawn horizontaly and vertically respectively. The second coordinates of each label are shown as the color of a vertex while the first coordinates are simply written on the vertex.}\label{fig:base_case}
\end{figure}
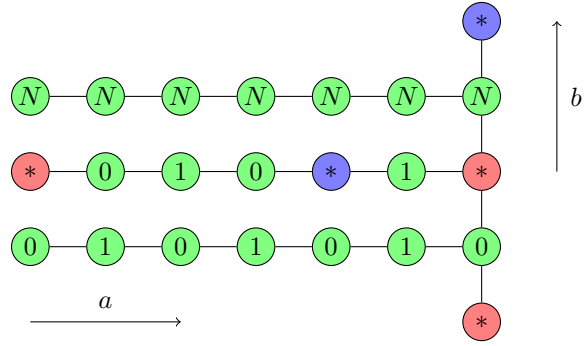

\begin{claim}\label{claim:base_case_bc1}
    $\Pi \in \BC_1(\F_2)$. 
\end{claim}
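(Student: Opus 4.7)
By Corollary~\ref{cor:shift_LCL_universal}, it suffices to construct a Baire class $1$ $\Pi$-labeling of $X := F(\F_2, 2^\omega)$. My plan is to first continuously $3$-color the $b$-orbits and then use this coloring to determine the first coordinate along each $a$-orbit. Since the $\langle b\rangle$-subaction on $X$ is a free continuous $\Z$-action on a $0$-dimensional Polish space, it is standard that a continuous proper $3$-coloring $\chi \colon X \to \{R, B, G\}$ of the $b$-Schreier graph exists (for instance, by pulling back a continuous proper $3$-coloring of $F(\Z, 2^\omega)$ along the $\Z$-equivariant injection produced by the shift proposition of Section~\ref{sec:shift}). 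Take $\chi$ as the second coordinate of the labeling.

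For the first coordinate, assign $*$ on the clopen set $\chi^{-1}(\{R, B\})$; assign $N$ on the closed set $G_N$ of points in $\chi^{-1}(G)$ whose entire $a$-orbit lies in $\chi^{-1}(G)$; and on the open set $G_* := \chi^{-1}(G) \setminus G_N$, set $c_1(x) := k \bmod 2$, where
\[
k := \begin{cases} \min\{n \geq 1 : \chi(a^{-n}x) \in \{R, B\}\} & \text{if this minimum exists,} \\ \min\{n \geq 1 : \chi(a^n x) \in \{R, B\}\} & \text{otherwise.} \end{cases}
\]
A short case check shows $c_1$ alternates along each maximal $G$-interval: when both $x$ and $ax$ lie in $G_*$, the fact that $x$ itself is a $G$-point forces both points to use the same side (left or right) of their nearest $*$-point, and the corresponding $k$-values differ by exactly $1$. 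All four LCL constraints follow.

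To verify the labeling is Baire class $1$, since the label set is finite it suffices to show each fiber is $F_\sigma$. The $(*, R)$, $(*, B)$, and $(N, G)$ fibers are clopen, clopen, and closed, respectively. The $(i, G)$ fiber for $i \in \{0, 1\}$ is the union of (a) the set where the nearest $*$ lies to the left at distance of parity $i$, a countable union of clopen sets and so open; and (b) the set where there is no $*$ to the left and the nearest $*$ to the right has distance of parity $i$. The main obstacle is piece (b): the ``no $*$ to the left'' condition $\{x : \chi(a^{-n}x) = G \text{ for all } n \geq 1\}$ is a priori $\boldPi^0_2$, which could have pushed the fiber into higher complexity. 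Fortunately this condition is a countable intersection of clopen sets, hence closed, so (b) is the intersection of a closed and an open set and thus $F_\sigma$. Combining the two pieces, every fiber is $F_\sigma$, so the labeling is Baire class $1$.
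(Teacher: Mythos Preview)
Your proof is correct and follows essentially the same approach as the paper's: both take a continuous $3$-coloring of the $b$-orbits, label entire-green $a$-orbits with $N$, and on the remaining green points assign $0/1$ by the parity of the distance to the nearest $*$, preferring the left direction and falling back to the right; your sets (a) and (b) are exactly the paper's $L^i_-$ and $L^i_+ \setminus L_-$. The complexity verification is likewise the same, with your observation that the ``no $*$ to the left'' condition is closed (not merely $\boldPi^0_2$) matching the paper's remark that each $L^i_\pm$ is open.
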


\begin{proof}
    By \cite{kst, bernshteyn2023distributed}, 3-coloring with respect to the usual generator is in $\CONT(\Z)$. Thus there is a continuous $c:F(\F_2,2^\omega) \to \{R,B,G\}$ which is a 3-coloring of the $b$-orbits. We will use $c$ for our second coordinates and construct a Baire-class 1 function $d:F(\F_2,2^\omega) \to \{0,1,N,*\}$ so that $(d,c)$ is a $\Pi$-labeling. 

    We are forced to put $d(x) = *$ if and only if $c(x) \in \{R,B\}$, so $d\inv(*)$ is clopen. Put $d(x) = N$ if and only if $\langle a \rangle x \subseteq c\inv(G)$. That is, any time an $a$-orbit is entirely green, we label it all $N$ as in the first row in Figure \ref{fig:base_case}. Then $d\inv(N)$ is closed. 

    Let $L$ be the set of remaining points. That is, $G$ points with some $*$ in their $a$-orbit. 
    $L$ is covered by the open sets 
    $L_- := \{x \in L \mid \exists n > 0, d(a^{-n} x) = *\} $ and 
    $L_+ := \{x \in L \mid \exists n > 0, d(a^n x) = *\} $. Now define 
    \begin{itemize}
        \item $L^0_- = \{x \in L_- \mid \textnormal{the least } n \textnormal{ witnessing } x \in L_- \textnormal{ is even}\}$.
        \item $L^1_- = \{x \in L_- \mid \textnormal{the least } n \textnormal{ witnessing } x \in L_- \textnormal{ is odd}\}$.
        \item $L^0_+ = \{x \in L_+ \mid \textnormal{the least } n \textnormal{ witnessing } x \in L_+ \textnormal{ is even}\}$.
        \item $L^1_+ = \{x \in L_+ \mid \textnormal{the least } n \textnormal{ witnessing } x \in L_+ \textnormal{ is odd}\}$.
    \end{itemize}
    For a given $n > 0$, the set of $x$ such that $n$ is the least witness to $x \in L_-$ is clopen since $d\inv(*)$ is clopen, and likewise for $L_+$. Thus each of these sets is open. Each is also clearly $a$-independent. It is then easy to check that we can set $d\inv(i) = L_-^i \cup (L_+^i \setminus L_-)$ for each $i$. 
\end{proof}

The proof of this Claim explains the choice of this LCL. We want to code a $\boldPi^0_1$-predicate like ``every point in my $a$-orbit is green'', and we add a label $N$ which is supposed to do this. While a local rule can make sure orbits labeled with $N$ do indeed contain only green points, there is no such way to force an entirely green orbit to use the $N$ label. Instead of forcing it, we strongly encourage the use of the label $N$ in this case by making the alternative a task (proper 2-coloring) which is very difficult on $a$-invariant sets (as reflected by Proposition \ref{prop:2_color_meager} and Theorem \ref{thm:2_color_complexity}), but which can still be performed easily on orbits that are not all green.

As suggested by this discussion, for the second half of the base case of Theorem \ref{thm:main_strong} we'll take $\Lambda_* = \{N\} \times \{R,B,G\}$, the set of points labeled $N$ in the first coordinate. (It would be equivalent to take $\Lambda_* = \{(N,G)\}$ since the other pairings are not allowed.)

\begin{claim}\label{claim:base_case_hardness}
    Let $(d,c) : F(\F_2,2^\omega) \to \{0,1,N,*\} \times \{R,B,G\}$ be a Borel $\Pi$-labeling. $d\inv(N)$ is $\boldPi^0_1$-hard. 
\end{claim}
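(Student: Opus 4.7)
My approach is to show that $d\inv(N)$ is a nonempty Borel subset of $F(\F_2, 2^\omega)$ with empty interior; by Wadge's Lemma for Borel sets, a Borel set that is not $\Sig^0_1$ is $\boldPi^0_1$-hard, giving the conclusion. The first observation is the inclusion $d\inv(N) \subseteq E$, where
\[ E := \{x \in F(\F_2,2^\omega) : \forall k \in \Z,\ c(a^k x) = G\} \]
is the $a$-invariant Borel set of entirely green $a$-orbits: the constraints force $c(x) = G$ whenever $d(x) = N$, and the $a$-invariance of $N$-labels propagates greenness to the whole $a$-orbit.

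The key structural step is to show $E$ has empty interior. Suppose for contradiction that a nonempty basic open $U = \{x : x|_F \in V\} \subseteq E$ exists. Then the $a$-saturation $W := \bigcup_k a^k U$ is also contained in $E \subseteq c\inv(G)$, and the 3-coloring condition $c(x) \ne c(bx)$ gives $c\inv(G) \cap b\cdot c\inv(G) = \emptyset$, so $W \cap bW = \emptyset$. On the other hand, since $\F_2$ is free, for all sufficiently large $|k|$ the supports $a^k F$ and $bF$ are disjoint; then $a^k U \cap bU$ is a nonempty basic open subset of $(2^\omega)^{\F_2}$ whose intersection with the dense $G_\delta$ set $F(\F_2, 2^\omega)$ is nonempty, contradicting $W \cap bW = \emptyset$. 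Hence $E$, and thus $d\inv(N) \subseteq E$, has empty interior.

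To complete the argument I would show $d\inv(N) \ne \emptyset$. If instead $d\inv(N) = \emptyset$, then $d|_E$ takes values in $\{0,1\}$ and constitutes a Borel proper 2-coloring of the $a$-orbits of $E$, so $E$ lies in the analogue of $\mathcal{I}$ for the $a$-action on $F(\F_2, 2^\omega)$; by a generalization of Proposition~\ref{prop:2_color_meager} to this setting (the same odd-power shift argument works verbatim in the product topology on $(2^\omega)^{\F_2}$), $E$ would then be meager. On the other hand, one argues $E$ is non-meager: applying Proposition~\ref{prop:2_color_meager} in the $b$-direction shows that $c\inv(G)$ must meet a comeager collection of $b$-orbits (otherwise a comeager $b$-invariant subset would carry a Borel 2-coloring), and a Baire-category transfer to the $a$-direction—exploiting the Bernoulli independence of constraints at distinct $\F_2$-elements—produces a non-meager set of points whose entire $a$-orbit is green.

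Putting this together, $d\inv(N)$ is a nonempty Borel subset of $F(\F_2, 2^\omega)$ with empty interior, hence it is not open and is $\boldPi^0_1$-hard by Wadge. The main obstacle I anticipate is the nonemptiness step: transferring non-meagerness across the $a$- and $b$-directions in the Bernoulli shift is not a direct Fubini--Kuratowski--Ulam statement and demands a careful use of the freeness of $\F_2$ and of the independence of coordinates. A possible alternative is to fix the continuous 3-coloring of $b$-orbits provided by \cite{kst, bernshteyn2023distributed}, use it to exhibit an explicit witness in $E$, and then reduce the arbitrary-labeling case to this one.
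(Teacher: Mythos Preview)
Your empty-interior argument is fine and essentially the same as the paper's (the paper phrases it via generic ergodicity of the $a$-action, but the content is identical). The problem is the nonemptiness step, and it is fatal rather than merely technical.

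You propose to derive a contradiction from $d\inv(N)=\emptyset$ by showing that $E$ would then be meager while also being non-meager. But $E$ is \emph{always} meager, regardless of $d$. Indeed, $E$ is an $a$-invariant Borel set, so by generic ergodicity of $a$ on $F(\F_2,2^\omega)$ it is meager or comeager; if it were comeager then $c\inv(G)\supseteq E$ would be comeager, yet $c\inv(G)\cap b\cdot c\inv(G)=\emptyset$ since $c$ is a proper coloring of the $b$-orbits, and a comeager set cannot be disjoint from its image under the homeomorphism $b$. (This is exactly your own empty-interior argument upgraded from ``contains a nonempty open set'' to ``is comeager on a nonempty open set''.) So the ``Baire-category transfer'' you sketch---from $c\inv(G)$ being non-meager to $E=\bigcap_k a^{-k}c\inv(G)$ being non-meager---simply cannot succeed; there is no Fubini/Kuratowski--Ulam-type statement that will rescue it.

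The paper's route around this is Marks' Lemma (a Borel-determinacy argument): one applies it with $\Delta=\langle a\rangle$, $\Gamma=\langle b\rangle$, and $A=c\inv(G)$. One alternative produces a continuous $\langle b\rangle$-equivariant map into $c\inv(\{R,B\})$, giving a Borel $2$-coloring of $F(\Z,2^\omega)$, which is impossible. The other alternative, if $d\inv(N)=\emptyset$, produces a continuous $\langle a\rangle$-equivariant map into $d\inv(\{0,1\})$, giving the same contradiction. Thus $d\inv(N)\neq\emptyset$. Your suggested fallback of fixing a specific continuous $3$-coloring and ``reducing the arbitrary-labeling case to this one'' does not work either: the set $E$ depends on the given $c$, and there is no reduction between different Borel $\Pi$-labelings. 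The nonemptiness genuinely requires the determinacy input (equivalently, Marks' theorem that $F(\F_2,2^\omega)$ has no Borel $3$-coloring with respect to $\{a,b\}$).
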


An important remark regarding this Claim is that it is not even a priori clear that $d\inv(N)$ is nonempty! For instance, suppose we defined the same LCL on $\Z^2 = \langle a,b \mid ab = ba\rangle$. By a result of Gao, Jackson, Krohne, and Seward \cite{GJKS_borel} there is a Borel 3-coloring of $F(\Z^2,2^\omega)$ with respect to the generating set $\{a,b\}$. If we used this as our 3-coloring of the $b$-orbits, then there would be no $a$-orbit with only green points. 
Fortunately, a result of Marks says that $F(\F_2,2^\omega)$ has no Borel 3-coloring with respect to $\{a,b\}$ \cite{Marks}. 
The techniques used to show this, namely the following lemma, will also be key in our situation. 

\begin{lemma}[{\cite[Lemma 2.12, Remark 2.9]{marks2017uniformity}}]
    Let $\Delta,\Gamma$ be countable groups and $A \subseteq F(\Delta * \Gamma, 2^\omega \times 2^\omega)$ Borel. At least one of the following holds:
    \begin{itemize}
        \item There is a continuous $\Delta$-equivariant function $f:F(\Delta,2^\omega) \to F(\Delta * \Gamma,2^\omega \times 2^\omega)$ with $\ran(f) \subseteq A$ and moreover $(f(x)(1))_1 = x(1)$ for all $x$.
        \item There is a continuous $\Gamma$-equivariant function $f:F(\Gamma,2^\omega) \to F(\Delta * \Gamma,2^\omega \times 2^\omega)$ with $\ran(f) \cap A = \emptyset$ and moreover $(f(x)(1))_2 = x(1)$ for all $x$.
    \end{itemize}
\end{lemma}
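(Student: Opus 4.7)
The plan is to apply Borel determinacy to a two-player game $G_A$ of length $\omega$ whose payoff set is $A$. In the game, players cooperatively construct a point $y \in (2^\omega \times 2^\omega)^{\Delta * \Gamma}$, revealed one bit at a time, according to a fixed enumeration of $(\Delta * \Gamma) \times \omega$. Player I is responsible for the first coordinate of $y$ and player II for the second coordinate; additionally, player I's first moves reveal a distinguished element $x_1 \in 2^\omega$ which is set to be the first coordinate of $y(1)$, and symmetrically for player II. Player I wins iff $y \in A$. Because $A$ is Borel, so is the payoff set, and Borel determinacy yields a winning strategy for one of the players.

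Suppose player I has a winning strategy $\sigma$. I would define $f : F(\Delta, 2^\omega) \to F(\Delta * \Gamma, 2^\omega \times 2^\omega)$ by simulating a play of $G_A$ in which player I follows $\sigma$ and player II plays a fixed default (say all zeros), with the following hijack: whenever $\sigma$ calls for player I to reveal a bit of the first coordinate at a position $\delta \in \Delta$, we instead use the corresponding bit of $x(\delta)$. Continuity of $f$ is automatic, since each bit of each $(f(x)(w))_i$ depends on only finitely many bits of $x$ and only finitely many moves of the deterministic strategy. The coding constraint $(f(x)(1))_1 = x(1)$ is by construction, and the inclusion $\ran(f) \subseteq A$ is exactly the winning condition for $\sigma$. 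Symmetrically, a winning strategy for player II yields the second alternative.

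The main obstacle is the bookkeeping needed to make the strategy-to-function translation produce a $\Delta$-\emph{equivariant} map, not merely a continuous one, and to ensure the output lies in the free part. Equivariance is delicate because left-multiplication by $\delta_0 \in \Delta$ does not preserve the naive ``leading letter'' partition of $\Delta * \Gamma$, so the move-assignment must respect the $\Delta$-action more intrinsically. A clean way is to play on the Cayley tree of $\Delta * \Gamma$ with respect to the generating set $\Delta \cup \Gamma$: at each node $w$, player I fills the $\Delta$-directed half-edges incident to $w$ and player II fills the $\Gamma$-directed half-edges, so translation by $\delta_0 \in \Delta$ acts on this move-structure by a symmetry that the strategy respects, giving equivariance of $f$. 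Freeness of $y$ can be enforced by coding the underlying group element $w$ into extra bits of $y(w)$, so that $w \mapsto y(w)$ is injective and hence $y$ has trivial stabilizer.
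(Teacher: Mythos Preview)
The paper does not prove this lemma itself; it is quoted from \cite{marks2017uniformity}, and the only comment on its proof is the later remark that it ``is proved by a Borel determinacy argument'' in which ``the players define a point of $F(\Delta * \Gamma, 2^\omega \times 2^\omega)$ and compete on whether or not it lies in $A$.'' So the relevant comparison is with Marks' original argument, and at that level your plan is correct: Borel determinacy applied to such a game is exactly the method, and your suggestion about encoding group elements into extra bits to force freeness is also how it is done.

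There is, however, a real gap in your passage from a winning strategy $\sigma$ to the equivariant map $f$. You propose a single run in which Player~II plays a fixed default and Player~I follows $\sigma$, except that at positions $\delta\in\Delta$ you ``hijack'' and output the bit of $x(\delta)$ instead. But once you overwrite moves, the run is no longer a $\sigma$-play, so the winning condition no longer forces the outcome into $A$; and without the hijack the output does not depend on $x$, so $f$ would be constant rather than $\Delta$-equivariant. Your Cayley-tree reformulation in the last paragraph does not resolve this: it still describes one game against a fixed opponent, with no legitimate mechanism for $x$ to enter. The idea you are missing is the heart of Marks' construction: one runs \emph{infinitely many} copies of the game simultaneously, one anchored at each $\Delta$-coset in $\Delta*\Gamma$, using $\sigma$ as Player~I in every copy. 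Player~II's moves in the copy anchored at one coset are supplied by Player~I's $\sigma$-moves in the copies anchored at the neighbouring cosets, so the strategy is literally pitted against shifted copies of itself along the Bass--Serre tree. The datum $x$ enters only along the identity coset $\Delta$ (giving the ``moreover'' clause), and $\Delta$-equivariance holds because translating $x$ by $\delta_0$ merely relabels which coset is the base. Since every anchored game is a genuine $\sigma$-play, every shift of the assembled outcome lies in $A$, and in particular the outcome itself does.
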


We will not need the ``moreover'' parts until the next section, so for now we can ignore them and identify $2^\omega \times 2^\omega$ with $2^\omega$. 

\begin{subclaim}
    $d\inv(N) \neq \emptyset$.
\end{subclaim}

\begin{proof}
    Apply Marks' lemma with $\Delta = \langle a \rangle$, $\Gamma = \langle b \rangle$, and $A = c\inv(G)$. In the second case $c \circ f$ is a proper coloring of $F(\Z,2^\omega)$ using only $R$ and $B$, contradicting the fact that 2-coloring is not in $\BOREL(\Z)$. If $d\inv(N) = \emptyset$ then $A = d\inv(\{0,1\})$, so in the first case $d \circ f$ is a proper coloring of $F(\Z,2^\omega)$ using only $0$ and $1$, a contradiction again. 
\end{proof}

It is easy to see that a nonempty set with empty interior is $\boldPi^0_1$-hard, so the following completes the proof of Claim \ref{claim:base_case_hardness} and therefore our base case. 

\begin{subclaim}
    $d\inv(N)$ has empty interior. 
\end{subclaim}

\begin{proof}
    The action of $a$ on $F(\F_2,2^\omega)$ is generically ergodic; it has a dense orbit. This is easy to see since basic open sets in $(2^\omega)^{\F_2}$ constrain only finitely many coordinates. Therefore, since $d\inv(N)$ is $a$-invariant, if it had nonempty interior it would be comeager. Then $c\inv(G) \supseteq d\inv(N)$ would be comeager, but this cannot be the case since it is $b$-independent.
\end{proof}

\subsection{The inductive step}

In this section we prove the following, which clearly provides the inductive step for Theorem \ref{thm:main_strong}. (Take $\alpha = n \in \omega$ and $\beta = n+1$.)

\begin{theorem}
    Let $\Gamma$ be a countable group, $\alpha < \beta < \omega_1$, and $\Pi \in \BC_{\beta}(\Gamma)$ with label set $\Lambda$. Suppose there is a subset $\Lambda_* \subseteq \Lambda$ such that for any Borel $\Pi$-labeling $c : F(\Gamma,2^\omega) \to \Lambda$, $c\inv(\Lambda_*)$ is $\boldPi^0_{1+\alpha}$-hard. 

    Then there is an LCL $\Pi' \in \BC_{\beta + 1}(\Z * \Gamma)$ with label set say $\Lambda'$ and a subset $\Lambda_*' \subseteq \Lambda'$ such that for any Borel $\Pi$-labeling $c : F(\Z * \Gamma,2^\omega) \to \Lambda'$, $c\inv(\Lambda_*')$ is $\boldPi^0_{1+\alpha+1}$-hard. 
\end{theorem}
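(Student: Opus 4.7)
The plan is to mimic the base-case construction. Let $a$ denote the generator of the $\Z$ factor in $\Z * \Gamma$, and define $\Pi'$ to have label set $\Lambda' = \{0,1,N,*\} \times \Lambda$, window $\{a\} \cup S$ (where $S$ is the window of $\Pi$), and local constraints exactly analogous to the base case: (i) the second coordinate is a $\Pi$-labeling along the $\Gamma$-direction, (ii) the first coordinate equals $*$ if and only if the second coordinate lies in $\Lambda \setminus \Lambda_*$, (iii) a first coordinate of $N$ is $a$-invariant, and (iv) the labels $0$ and $1$ in the first coordinate give a partial proper $2$-coloring of $a$-orbits. Take $\Lambda_*' = \{N\} \times \Lambda$.

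To see $\Pi' \in \BC_{\beta+1}(\Z * \Gamma)$, given a free continuous action of $\Z * \Gamma$ on a zero-dimensional Polish space, restrict to the $\Gamma$-action and invoke $\Pi \in \BC_\beta(\Gamma)$ to pick a Baire class $\beta$ $\Pi$-labeling $c$ for the second coordinate. Define the first coordinate $d$ by the base-case recipe: $d = *$ wherever $c \notin \Lambda_*$; $d = N$ on $a$-orbits entirely contained in $c^{-1}(\Lambda_*)$; and on the remaining set assign $\{0,1\}$ using the parity of the distance to the nearest $*$-point along the $a$-direction. From $c^{-1}(\Lambda_*) \in \Del^0_{1+\beta}$, a quick computation shows $d^{-1}(*), d^{-1}(N) \in \Del^0_{1+\beta+1}$ and $d^{-1}(0), d^{-1}(1) \in \Sig^0_{1+\beta} \subseteq \Del^0_{1+\beta+1}$, so $d$ is Baire class $\beta + 1$.

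For the hardness claim, fix a Borel $\Pi'$-labeling $(d, c)$ of $F(\Z * \Gamma, 2^\omega)$ and apply Marks' lemma with $\Delta = \Z$, $A = c^{-1}(\Lambda_*)$. The second alternative would yield a continuous $\Gamma$-equivariant $g : F(\Gamma, 2^\omega) \to F(\Z * \Gamma, 2^\omega)$ with image disjoint from $A$; then $c \circ g$ would be a Borel $\Pi$-labeling of $F(\Gamma, 2^\omega)$ whose $\Lambda_*$-fiber is empty, contradicting the inductive hypothesis since $\emptyset$ is not $\boldPi^0_{1+\alpha}$-hard. So the first alternative applies, producing continuous $\Z$-equivariant $f : F(\Z, 2^\omega) \to F(\Z * \Gamma, 2^\omega)$ with $\ran(f) \subseteq A$ and $(f(x)(\delta))_1 = x(\delta)$ for every $\delta \in \Z$. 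The set $B := (d \circ f)^{-1}(\{0,1\})$ is a Borel, $\Z$-invariant subset of $F(\Z, 2^\omega)$ on which $d \circ f$ restricts to a Borel $2$-coloring, so $B \in \mathcal{I}$, and Theorem \ref{thm:2_color_complexity} (applied at index $1+\alpha$) then says $F(\Z, U_{1+\alpha}) \setminus B$ is $\boldPi^0_{1+\alpha+1}$-hard inside $F(\Z, 2^\omega)$.

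The most delicate step will be transferring this hardness to $d^{-1}(N)$ itself, since $f\inv(d^{-1}(N))$ equals $F(\Z, 2^\omega) \setminus B$ rather than the sharper $F(\Z, U_{1+\alpha}) \setminus B$, so naive composition of $f$ with a Wadge reduction overshoots the target set $E$. The key leverage will be the moreover clause: defining $W := \{y \in F(\Z * \Gamma, 2^\omega) : (y(\delta))_1 \in U_{1+\alpha} \text{ for all }\delta \in \Z\}$, we have $f\inv(W) = F(\Z, U_{1+\alpha})$, so $f$ directly witnesses $\boldPi^0_{1+\alpha+1}$-hardness of $d^{-1}(N) \cap W$. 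To upgrade to hardness of $d^{-1}(N)$ itself I would arrange the Wadge reduction $h$ furnished by Theorem \ref{thm:2_color_complexity} so that its image lies in $B \cup F(\Z, U_{1+\alpha})$, which should be achievable by exploiting the constructive nature of Matrai's test pairs to route each $h(y)$ either into the hard core $F(\Z, U_{1+\alpha}) \setminus B$ or into the meager set $B$; then $f \circ h$ satisfies $(f \circ h)\inv(d^{-1}(N)) = E$ on the nose. In the degenerate case $B = \emptyset$ where this routing fails, a second application of Marks' lemma with $A = d^{-1}(\{0,1\})$ (whose first alternative would contradict the non-Borelness of $2$-coloring on $\Z$) supplies a $\Gamma$-equivariant auxiliary map through which the inductive hypothesis pulls back to yield the desired hardness of $d^{-1}(N)$.
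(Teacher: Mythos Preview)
Your construction of $\Pi'$ and the verification that $\Pi' \in \BC_{\beta+1}$ are essentially correct (modulo the convention swap in constraint (ii), which does not affect that part). The hardness argument, however, has a genuine gap.

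Applying Marks' lemma with the simple set $A = c^{-1}(\Lambda_*)$ gives, in the first alternative, $(d \circ f)^{-1}(N) = F(\Z,2^\omega) \setminus B$ for some $B \in \mathcal{I}$. But nothing forces this set to be $\boldPi^0_{1+\alpha+1}$-hard: if $B = \emptyset$ it is the whole space, hence clopen. Your patches do not close this gap. The ``routing'' argument asks for a continuous reduction whose range lies in $B \cup F(\Z,U_{1+\alpha})$, but Theorem~\ref{thm:2_color_complexity} is proved via Wadge determinacy, not by an explicit construction, so there is no handle on the range of the reduction it produces; moreover $B$ is an arbitrary member of $\mathcal{I}$, not something you get to design. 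Your degenerate-case fix also falls short: in the second alternative of the auxiliary Marks' lemma application you get a $\Gamma$-equivariant $g'$ with $g'^{-1}(d^{-1}(N)) = (c \circ g')^{-1}(\Lambda_*)$, and the inductive hypothesis only makes this $\boldPi^0_{1+\alpha}$-hard, one level short of what is needed.

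The paper avoids all of this by building the Wadge game directly into the set $A$: it takes
\[
A = \{x \mid c(x) \notin \Lambda_* \Leftrightarrow x(1)_1 \in U_{1+\alpha}\}.
\]
In the second alternative, $(c \circ g)^{-1}(\Lambda_*)$ then Wadge-reduces to $U_{1+\alpha} \in \Sig^0_{1+\alpha}$, contradicting the inductive $\boldPi^0_{1+\alpha}$-hardness. In the first alternative, the moreover clause forces $d(f(x)) = *$ exactly when $x(1) \notin U_{1+\alpha}$, so one gets $(d\circ f)^{-1}(N) = F(\Z,U_{1+\alpha}) \setminus B$ on the nose, and Theorem~\ref{thm:2_color_complexity} applies directly. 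Note that this argument uses the paper's convention ``$*$ iff $c \in \Lambda_*$''; with your swapped convention the second-alternative step would only show $(c\circ g)^{-1}(\Lambda_*)$ Wadge-reduces to a $\boldPi^0_{1+\alpha}$ set, which is no contradiction.
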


Note that we need $\alpha < \beta$ for the assumption to be non vacuous. 

The construction and the proof that it works will closely parallel those from the base case. Let $\Z = \langle a\rangle$. We will have $\Lambda' = \{0,1,N,*\} \times \Lambda$. The window will be $\{a\} \cup S$ where $S$ is the window of $\Pi$. The constraints are the following:
\begin{itemize}
    \item The second coordinate gives a $\Pi$-labeling of each $\Gamma$-orbit. 
    \item The first coordinate is $*$ if and only if the second coordinate is in $\Lambda_*$. 
    \item The vertices with first coordinate $N$ are $a$-invariant. 
    \item The 0 and 1 labels give a partial 2-coloring of each $a$-orbit. 
\end{itemize}

\begin{claim}\label{claim:is_possible}
    $\Pi' \in \BC_{\beta+1}(\Z * \Gamma)$. 
\end{claim}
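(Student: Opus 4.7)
The plan is to mimic the proof of Claim \ref{claim:base_case_bc1}, replacing the continuous $3$-coloring used there with a Baire class $\beta$ $\Pi$-labeling of the $\Gamma$-orbits. Write $X = F(\Z * \Gamma, 2^\omega)$. Since $\Gamma \leq \Z * \Gamma$, we may regard $\Pi$ as an LCL on $\Z * \Gamma$ (using the observation preceding Corollary \ref{cor:subgroup_LCL}), and Corollary \ref{cor:subgroup_LCL} together with Corollary \ref{cor:shift_LCL_universal} then produces a Baire class $\beta$ function $c : X \to \Lambda$ which is a $\Pi$-labeling on every $\Gamma$-orbit. I take $c$ as the second coordinate of the desired $\Pi'$-labeling and need only construct a Baire class $\beta+1$ first coordinate $d : X \to \{0,1,N,*\}$ which completes it into a valid $\Pi'$-labeling.

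Most of $d$ is forced. The $*$-constraint dictates $d\inv(*) = c\inv(\Lambda_*)$, which is $\Del^0_{1+\beta}$ since $c$ is Baire class $\beta$. I then set $d(x) = N$ precisely on the $a$-invariant set of those $x$ whose entire $a$-orbit avoids $c\inv(\Lambda_*)$, namely
\[
 d\inv(N) = \bigcap_{n \in \Z} a^n \cdot (X \setminus c\inv(\Lambda_*)),
\]
a countable intersection of $\Del^0_{1+\beta}$ sets, and hence $\boldPi^0_{1+\beta}$. On the remaining set $L = X \setminus (d\inv(*) \cup d\inv(N))$ each point lies in an $a$-orbit meeting $c\inv(\Lambda_*)$, so I $2$-color each $*$-free stretch by the parity of the distance to the nearest $*$, exactly as in the base case. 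Defining
\[
 L_\pm = \{x \in L \mid \exists n > 0,\ d(a^{\mp n} x) = *\}
\]
and letting $L_\pm^i \subseteq L_\pm$ consist of those $x$ whose least witness has parity $i \in \{0,1\}$, each $L_\pm^i$ is a countable union of finite Boolean combinations of $\Del^0_{1+\beta}$ sets and is therefore $\Sig^0_{1+\beta}$. Setting $d\inv(i) := L_-^i \cup (L_+^i \setminus L_-)$ for $i \in \{0,1\}$ then gives a partition of $L$ which is a proper partial $2$-coloring of each $a$-orbit by the same case analysis as in the base case.

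Since $\Sig^0_{1+\beta} \cup \boldPi^0_{1+\beta} \subseteq \Del^0_{2+\beta}$, every preimage $d\inv(\lambda)$ for $\lambda \in \{0,1,N,*\}$ is $\Del^0_{2+\beta}$, so $d$ (and hence $(d,c)$) is Baire class $\beta+1$, as desired. I do not anticipate a genuine obstacle beyond the bookkeeping: the only real departure from Claim \ref{claim:base_case_bc1} is that $d\inv(*)$ is no longer clopen but merely $\Del^0_{1+\beta}$, which bumps every complexity estimate up by $\beta$ and costs precisely one additional level in the final Baire class.
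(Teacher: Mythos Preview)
Your proof is correct and follows essentially the same approach as the paper's: obtain a Baire class $\beta$ $\Pi$-labeling $c$ for the $\Gamma$-orbits, force $d\inv(*)=c\inv(\Lambda_*)\in\Del^0_{1+\beta}$ and $d\inv(N)\in\boldPi^0_{1+\beta}$, and $2$-color the remaining set via the $L_\pm^i$'s, each $\Sig^0_{1+\beta}$. The only cosmetic difference is that you explicitly invoke Corollary~\ref{cor:subgroup_LCL} to pass from $\BC_\beta(\Gamma)$ to $\BC_\beta(\Z*\Gamma)$, whereas the paper simply applies the definition of $\BC_\beta(\Gamma)$ directly to the induced $\Gamma$-action on $F(\Z*\Gamma,2^\omega)$; also your $\mp$ in the definition of $L_\pm$ reverses the paper's sign convention, but this is immaterial.
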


\begin{proof}
    By hypothesis, there is a Baire-class-$\beta$ $c:F(\Z * \Gamma,2^\omega) \to \Lambda$ which is a $\Pi$-labeling with respect to the action of $\Gamma$. We will use $c$ for our second coordinates and construct a Baire-class-$(\beta+1)$ function $d : F(\Z * \Gamma,2^\omega) \to \{0,1,N,*\}$ so that $(d,c)$ is a $\Pi'$-labeling. 

    We are forced to put $d(x) = *$ if and only if $c(x) \in \Lambda_*$, so $d\inv(*) \in \Del^0_{1+\beta}$. Put $d(x) = N$ if and only if $\langle a\rangle x \subseteq c\inv(\Lambda \setminus \Lambda_*)$. Then $d\inv(N) \in \boldPi^0_{1+\beta}$. 

    Define the sets $L_{\pm}^{i}$ exactly as in the proof of Claim \ref{claim:base_case_bc1}. Each of these sets is a countable union of $\Del^0_{1+\beta}$ sets, hence $\Sig^0_{1+\beta}$. Thus we can set $d\inv(i) = L_-^i \cup (L_+^i \setminus L_-)$ as before. 
\end{proof}

The following shows that we can take $\Lambda_*' = \{N\} \times \Lambda$.

\begin{claim}
    Let $(d,c) : F(\Z * \Gamma,2^\omega) \to \{0,1,N,*\} \times \Lambda$ be a Borel $\Pi'$-labeling. $d\inv(N)$ is $\boldPi^0_{1+\alpha+1}$-hard. 
\end{claim}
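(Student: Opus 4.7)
Following the template of the base case (Claim \ref{claim:base_case_hardness}), the plan is to apply Marks' lemma to $A := c\inv(\Lambda \setminus \Lambda_*) \subseteq F(\Z * \Gamma, 2^\omega \times 2^\omega)$ with $\Delta = \langle a \rangle \cong \Z$. The second case of Marks' lemma---a continuous $\Gamma$-equivariant $f_2 : F(\Gamma, 2^\omega) \to F(\Z * \Gamma, 2^\omega \times 2^\omega)$ with image disjoint from $A$---is ruled out using the induction hypothesis: $c \circ f_2$ would be a Borel $\Pi$-labeling of $F(\Gamma, 2^\omega)$ with $(c \circ f_2)\inv(\Lambda_*) = F(\Gamma, 2^\omega)$, which is clopen and therefore cannot be $\boldPi^0_{1+\alpha}$-hard, by strictness of the Borel hierarchy on uncountable Polish spaces.

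So we are in Case 1 of Marks' lemma: there is a continuous $\Z$-equivariant $f_1 : F(\Z, 2^\omega) \to F(\Z * \Gamma, 2^\omega \times 2^\omega)$ with image in $A$, satisfying the moreover condition $(f_1(x)(1))_1 = x(1)$. Let $E := f_1\inv(d\inv(N))$. Since $d\inv(N)$ is $a$-invariant and $f_1$ is $\Z$-equivariant, $E$ is a $\Z$-invariant Borel subset of $F(\Z, 2^\omega)$. Moreover, since $f_1$'s image lies in $A = d\inv(\{0,1,N\})$, the complement $F(\Z, 2^\omega) \setminus E = f_1\inv(d\inv(\{0,1\}))$ admits the Borel proper 2-coloring $d \circ f_1$ of its $\Z$-orbits, and hence belongs to $\mathcal{I}$. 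Theorem \ref{thm:2_color_complexity} at level $1+\alpha$ then gives that $F(\Z, U_{1+\alpha}) \cap E$ is $\boldPi^0_{1+\alpha+1}$-hard as a subset of $F(\Z, 2^\omega)$.

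It remains to transfer this hardness to $d\inv(N)$ via $f_1$. Given a continuous reduction $g$ witnessing hardness of $F(\Z, U_{1+\alpha}) \cap E$ from some $\boldPi^0_{1+\alpha+1}$ set $P$, we have $(f_1 \circ g)\inv(d\inv(N)) = g\inv(E) \supseteq P$, but equality may fail if some $g(y)$ with $y \notin P$ lands in $E \setminus F(\Z, U_{1+\alpha})$. I expect this to be the main obstacle. The plan is to resolve it using the moreover part of Marks' lemma---which makes $f_1$ a topological embedding via the continuous left inverse $\pi(y)(\delta) := y(\delta)_1$ for $\delta \in \Z$---to produce a Wadge/Matrai-style reduction whose image is constrained to $F(\Z, U_{1+\alpha})$; equivalently, one runs the Matrai Baire category argument of Theorem \ref{thm:2_color_complexity} directly in $F(\Z * \Gamma, 2^\omega)$ with its label space endowed with an appropriate Matrai-refined product topology.
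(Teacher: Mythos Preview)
Your Case 2 argument is fine, but Case 1 has a genuine gap that your proposed fixes do not close.

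With your choice $A = c^{-1}(\Lambda \setminus \Lambda_*)$, all you learn in Case 1 is that $E := f_1^{-1}(d^{-1}(N))$ has complement in $\mathcal{I}$. This alone forces no complexity on $E$: nothing prevents $d \circ f_1 \equiv N$, in which case $E = F(\Z, 2^\omega)$ is clopen. Applying Theorem \ref{thm:2_color_complexity} to $F(\Z,2^\omega)\setminus E$ only tells you $F(\Z, U_{1+\alpha}) \cap E$ is hard, and as you correctly note, a reduction witnessing this need not have range inside $F(\Z, U_{1+\alpha})$. Your proposed repairs do not work as stated. The embedding property of $f_1$ concerns passing from $F(\Z,2^\omega)$ to the larger space; it says nothing about constraining the range of a reduction \emph{into} $F(\Z,2^\omega)$. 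And the proof of Theorem \ref{thm:2_color_complexity} obtains its reduction by showing the set is not $\Sig^0_{1+\alpha+1}$ and invoking Wadge determinacy, which gives no control on the range; running the Matrai argument ``directly in $F(\Z*\Gamma,2^\omega)$'' would require knowing $d^{-1}(N)$ is meager in an appropriate refined topology there, and you have no such information.

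The paper sidesteps the obstacle by building $U_{1+\alpha}$ into the set handed to Marks' Lemma:
\[
A = \{x \mid c(x) \notin \Lambda_* \ \Leftrightarrow\ x(1)_1 \in U_{1+\alpha}\}.
\]
In Case 2 this gives $(c \circ f)^{-1}(\Lambda_*) \le_W U_{1+\alpha}$, contradicting $\boldPi^0_{1+\alpha}$-hardness. In Case 1, the ``moreover'' clause $(f(x)(1))_1 = x(1)$ forces $(d \circ f)(x) = * \Leftrightarrow x(1) \notin U_{1+\alpha}$, so that $(d \circ f)^{-1}(N) = F(\Z, U_{1+\alpha}) \setminus B$ with $B \in \mathcal{I}$, exactly the shape to which Theorem \ref{thm:2_color_complexity} applies. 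The idea you are missing is this Wadge-flavoured choice of $A$: it pins the $*$'s in the pullback to the canonical $\Sig^0_{1+\alpha}$-complete set, so the $N$-set is forced to equal $F(\Z, U_{1+\alpha})$ minus an $\mathcal{I}$-error, rather than being an arbitrary co-$\mathcal{I}$ set.
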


\begin{proof}
    We will work on $F(\Z * \Gamma,2^\omega \times 2^\omega)$ instead. Let $U_{1+\alpha}$ be the $\Sig_{1+\alpha}^0$-complete set from Theorem \ref{thm:2_color_complexity}. Let
    \[ A = \{x \mid c(x) \notin \Lambda_* \Leftrightarrow x(1)_1 \in U_{1+\alpha} \}. \]
    We will apply Marks' Lemma to this $A$.

    In the second case $c \circ f$ is a $\Pi$-labeling of $F(\Gamma,2^\omega)$, and by definition of $A$ we have $(c \circ f)(x) \in \Lambda_* \Leftrightarrow (f(x)(1))_1 \in U_{1+\alpha}$. Since $f$ and the projection maps are continuous, we have $(c \circ f)\inv(\Lambda_*) \leq_W U_{1+\alpha}$, contradicting the assumption that the former is $\boldPi^0_{1+\alpha}$-hard. 

    Therefore we are in the first case. The Borel function $d \circ f : F(\Z,2^\omega) \to \{0,1,N,*\}$ has the following properties:
    \begin{itemize}
        \item A point $x$ is labeled with $*$ if and only if $x(1) \not\in U_{1+\alpha}$. (This uses the ``moreover'' from Marks' Lemma). 
        \item The set of points labeled $N$ is $\Z$-invariant. 
        \item 0 and 1 give a partial 2-coloring. 
    \end{itemize}
    Thus, each $\Z$-orbit either contains a point labeled $*$, is entirely 2-colored by 0 and 1, or is entirely labeled by $N$. 
    Moreoever the first case occurs if and only if some vertex in the orbit has a label from $2^\omega \setminus U_{1+\alpha}$. 
    These cases are shown in Figure \ref{fig:inductive_hardness}.
    
    Let $B \subseteq F(\Z,2^\omega)$ be the set of points $x$ whose entire orbit is labeled by 0 and 1. 
    Then $B \in \mathcal{I}$ by definition. Observing that $(d \circ f)\inv(N) = F(\Z,2^\omega) \cap (U_{1+\alpha}^\Z \setminus B) = F(\Z,U_{1+\alpha}) \setminus B$, we conclude that $(d \circ f)\inv(N)$ is $\boldPi^0_{1+\alpha+1}$-hard by definition of $U_{1+\alpha}$, and therefore $d\inv(N)$ is $\boldPi^0_{1+\alpha+1}$-hard since $f$ is continuous. 
\end{proof}

\begin{figure}
    \centering
    \begin{tikzpicture}[nodes={circle, draw, minimum size=5mm, inner sep=0pt}]
    \graph [nodes={circle, draw, minimum size=5mm, inner sep=2pt, empty nodes},
            edges={-}]{
     1[label=center:\small$*$, fill = red!50] --
     2[label=center:\small$0$, fill = green!50] --
     3[label=center:\small$1$, fill = green!50] -- 
     4[label=center:\small$0$, fill = green!50] -- 
     5[label=center:\small$*$, fill = red!50] -- 
     6[label=center:\small$1$, fill = green!50] -- 
     7[label=center:\small$*$, fill = red!50];

     21[label=center:\small$0$, fill = green!50] --
     22[label=center:\small$1$, fill = green!50] --
     23[label=center:\small$0$, fill = green!50] -- 
     24[label=center:\small$1$, fill = green!50] -- 
     25[label=center:\small$0$, fill = green!50] -- 
     26[label=center:\small$1$, fill = green!50] -- 
     27[label=center:\small$0$, fill = green!50];
            
    11[label=center:\small$N$, fill = green!50] --
     12[label=center:\small$N$, fill = green!50] --
     13[label=center:\small$N$, fill = green!50] -- 
     14[label=center:\small$N$, fill = green!50] -- 
     15[label=center:\small$N$, fill = green!50] -- 
     16[label=center:\small$N$, fill = green!50] -- 
     17[label=center:\small$N$, fill = green!50];
    
     };

    \node[draw = none] (A) at (6.4,-0.6) {};
    \node[draw = none] (B) at (6.4,-2.4) {};
    \node[draw = none] (C) at (-0.4,-1.4) {};
    \node[draw = none] (D) at (-0.4,-2.6)  {};
    \node[draw = none] (E) at (-0.4,-0.4) {};
    \node[draw = none] (F) at (-0.4,-1.6)  {};

    \draw[thick, decorate,decoration={brace,amplitude=10pt}] (A) -- (B) node[draw = none, midway, xshift = 25pt]{\small$U_{1+\alpha}^\Z$};

    \draw[thick, decorate,decoration={brace,amplitude=10pt,mirror}] (C) -- (D) node[draw= none, midway, xshift = -42pt]{\small$(d \circ f)\inv(N)$};

    \draw[thick, decorate,decoration={brace,amplitude=10pt,mirror}] (E) -- (F) node[draw= none, midway, xshift = -30pt]{\small$B$};
    
     \end{tikzpicture}
    \caption{The three ways a $\Z$-orbit can look after pulling back the $\Pi'$-labeling via $f$. Points are colored red if and only if they are labeled with $*$ in order to maintain the analogy with the base case and Figure \ref{fig:base_case}.}
    \label{fig:inductive_hardness}
\end{figure}
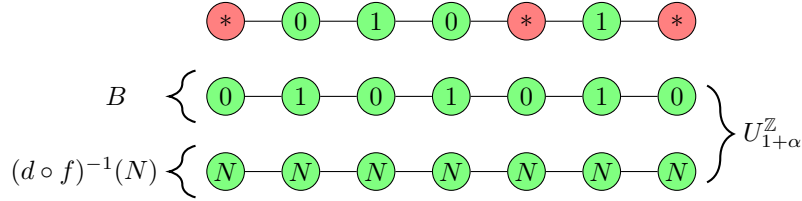

\begin{remark}
    Marks' Lemma is proved by a Borel determinacy argument. A certain game is concocted where the players define a point of $F(\Delta * \Gamma, 2^\omega \times 2^\omega)$ and compete on whether or not in lies in $A$. The idea of the proof above is to combine this game with the one used in the proof of Wadge determinacy. Indeed, the winning condition from the Wadge game is used to define the set $A$ to which Marks' Lemma is applied. 
\end{remark}

\section{Questions}

There are many more questions than answers concerning the classes $\BC_\alpha(\Gamma)$.
Here we collect the ones we find most interesting. We start by repeating Question \ref{q:main} from the introduction.

\begin{question}
    What is the least $\alpha < \omega_1$ such that $\BC_\alpha(\F_2) = \BOREL(\F_2)$?
\end{question}

Besides possibly the free groups, the groups which have received the most attention within the framework of LCLs on groups are, natrually, the abelian free groups $\Z^n$ \cite{GR_grids, BBLW}. Gao, Jackson, Krohne, and Seward showed that, for $n > 1$ and with respect to the usual generating set, proper 4-coloring is in $\CONT(\Z^n)$ while proper 3-coloring is in $\BOREL(\Z^n) \setminus \CONT(\Z^n)$ \cite{gjks, GJKS_borel}. It can be verified using their construction that, in fact, proper 3-coloring is in $\BC_1(\Z^n)$ \cite{UVW}. 
It is still unclear whether the hierarchy $\BC_\alpha(\Z^n)$ goes any farther than this.

\begin{question}
    Let $n > 1$. Is $\BOREL(\Z^n) = \BC_1(\Z^n)$?
\end{question}

As was mentioned in the introduction, for $n = 1$ we actually have $\BOREL(\Z) = \CONT(\Z)$. 

There is no real understanding of the complexity landscape for an arbitrary group $\Gamma$. We believe this deserves to be remedied, and in particular it would be interesting to know whether a more complicated group $\Gamma$ can help the hierarchy go farther than it does for $\F_n$.  

\begin{question}
    Is there a countable group $\Gamma$ for which $\BOREL(\Gamma) \neq \BC_\omega(\Gamma)$?. Is there, for every $\alpha < \omega_1$, a countable group $\Gamma$ for which $\BOREL(\Gamma) \neq \BC_\alpha(\Gamma)$? 
\end{question}

Finally, it would be interesting to know whether projective predicates can be encoded by local problems in the same way that arithmetic ones were in this paper. This line of questioning is to our knowledge completely unexplored. 

\begin{question}
    Assume Projective Determinacy. Is there an LCL $\Pi$ on $\F_2$ such that $F(\F_2,2^\omega)$ admits a projective $\Pi$-labeling but not a Borel one? What about for an arbitrary countable group $\Gamma$?
\end{question}

\section*{Acknowledgments}

We are thankful to Andrew Marks for much encouragement and many helpful conversations, especially related to useful directions for proving Theorem \ref{thm:2_color_complexity}. Thanks also to Forte Shinko and Anton Bernshteyn for interesting initial discussions regarding the main questions of this paper. 
The author is supported by the NSF under award number DMS-2402064.

\bibliographystyle{amsalpha} 
\bibliography{main}

\end{document}